\pgfplotsset{compat=1.12,axis lines=center}
\def\ps@pprintTitle{%
	\let\@oddhead\@empty
	\let\@evenhead\@empty
	\let\@oddfoot\@empty
	\let\@evenfoot\@oddfoot
}
\newtheorem*{theoremaux}{Theorem \theoremauxnum}
\gdef\theoremauxnum{1}
	\newtheorem{lemma}{\bf Lemma}[section]
	\newtheorem{example}{\bf Example}[section]
	\newtheorem{theorem}{\bf Theorem}[section]
	\newtheorem{proposition}[lemma]{\bf Proposition}
	\newtheorem{corollary}[lemma]{\bf Corollary}
	\newtheorem{definition}{\bf Definition}[section]
	\newtheorem{remark}{\bf Remark}[section]
	\newtheorem{conjecture}{\bf Conjecture}[section]
\journal{~}
\begin{document}

	\begin{frontmatter}
		
		
		
		\title{Existence of a Non-Zero $(0,1)$-Vector in the Row Space of Adjacency Matrices of Simple Graphs}

		
		
		\author{S. Bera}
		\ead{sudip\_bera@daiict.ac.in} 
		\address{Faculty of Mathematics,\\ 
			DA-IICT, Gandhinagr \\ India} 
		%

\begin{abstract}
We look for a non-zero $(0, 1)$-vector in the row space of the adjacency matrix $A(\Gamma)$ of a graph $\Gamma,$ provided $\Gamma$ has at least one edge. Akbari, Cameron, and Khosrovshahi conjectured that there exists a non-zero $(0,1)$-vector in the row space of $A(\Gamma)$ (over the real numbers) which does not occur as a row of $A(\Gamma).$ This conjecture can be easily verified for graphs having diameter is equal to $1$ (complete graphs). In this article, we affirmatively prove this conjecture for any graph whose diameter is $\geq 4.$  Furthermore, in the remaining two cases that is, for graphs with diameter is equal to $2$ or $3,$ we report some progress in support of the conjecture.
\end{abstract}  
\begin{keyword}
	adjacency matrix \sep non-zero vector \sep simple graph  
	
	\medskip  
	
	\MSC[2020] 15A03 \sep 05C50 
	
\end{keyword}

\end{frontmatter}

\section{Introduction}
Graphs are among the most standard models of both natural and human-made structures. They can be used to model many types of relations and process dynamics in computer science, physical, biological, and social systems. Many problems of practical interest can be represented by graphs. In general, graph theory has a wide range of applications in diverse fields. To investigate the different properties of a graph, 
researchers have introduced several matrices associated with it. The most common matrices that have been studied for graphs are defined by associating the vertices with the rows and columns as follows: 
\begin{definition}[Adjacency matrix]
For a simple undirected graph $\Gamma$ with vertex set \[\{v_1,  \cdots, v_n\},\] the \emph{adjacency matrix} of the graph $\Gamma$ is $A(\Gamma)=(a_{ij})_{n\times n}$ is defined as follows: 
\[a_{ij}=
\begin{cases}
1,  &\text{ if } v_i \text{ adjacent to }  v_j\\
0, & \text{ otherwise }
\end{cases}.\]
\end{definition}
The \emph{rank} of an undirected graph $\Gamma,$ denoted by $r(\Gamma)$ is defined as the rank of its adjacency matrix. A graph is said to be \emph{singular} (\emph{non-singular}) if its adjacency matrix is a singular (non-singular) matrix. The eigenvalues $\lambda_1, \cdots, \lambda_n$ of $A(\Gamma)$ are said to be the eigenvalues of the graph $\Gamma,$ and to form the spectrum of this graph. The number of zero eigenvalues in the spectrum of the graph $\Gamma$ is called its \emph{nullity} and is denoted by $\eta(\Gamma).$ So, $\eta(\Gamma)=n-r(\Gamma).$ 

The rank of a graph is an important parameter, as several important graph-theoretic parameters are bounded by a function of the rank of a graph, and the rank is bounded by a function of the number $t$ of negative eigenvalues (by Theorem 12 of \cite{Cameron-akbaria-conjecture}). Moreover, characterizing all graphs $\Gamma$ in which $r(\Gamma)<n$ (equivalently $\eta(\Gamma)>0),$ is of great interest in chemistry, because, as has been shown in \cite{Longuet-Higgins}, for a bipartite graph $\Gamma$ (corresponding to an alternate hydrocarbon), if $r(\Gamma)<n,$ then it indicates the molecule which such a graph represents is unstable. In \cite{cha-of-singular-graph}, Collatz and Sinogowitz first posed this problem. The problem has not yet been solved completely, only for trees and bipartite graphs some particular results are known (see
\cite{Trees-max-nulity,nulity-line-graph-tree-G-S-1}). In recent years, this problem has been paid much attention by many researchers (\cite{Nulity-of-graphs-ELA,Nulity-tricyclic-graph-Cheng,Nulity-bipartite,Nulity-bicyclic-graph}).

On the other hand, the problem of bounding the order of a graph in terms of the rank was
first studied by Kotlov and Lov\'{a}sz \cite{LOvas}. Let $r\geq 2$ be an integer. Then the order of a graph with rank $r$ is trivially bounded above by $2r-1$ as
soon as we make the assumption that the graph is reduced, where a graph is said to be \emph{reduced} if no two vertices have the same set of neighbours. Let $m(r)$ be the maximum possible order of a reduced graph of rank $r.$ Kotlov and Lov\'{a}sz \cite{LOvas} proved that there exists constant $c$ such that the order of every reduced graph of rank $r$ is at most  $c\cdot2^{\frac{r}{2}},$ Moreover, for any $r\geq 2,$ they constructed a graph of rank $r$ and order $n(r),$ where
\begin{equation*}
n(r)=
\begin{cases}
2^{\frac{r+2}{2}}-2, & \text{ if } r \text{ is even } \\
5\cdot2^{\frac{r-3}{2}}-2, & \text{  if } r \text{ is  odd }
\end{cases}
\end{equation*}
Akbari et al. \cite{Cameron-akbaria-conjecture} conjectured that in fact $m(r)=n(r).$ In \cite{maximum-order-adjacency-matrices} Haemers and Peeters proved the conjecture for graphs containing an induced matching of size $\frac{r}{2}$ or an induced subgraph consisting of a matching of size $\frac{(r-3)}{2}$ and a cycle of length $3.$ In \cite{Ghorbani-combinatorica}, Ghorbani et al. showed that if the conjecture is not true, then there would be a counterexample of rank at most $46.$ It was also shown that the order of every reduced graph of rank $r$ is at most $8n(r)+14.$ Royle \cite{the-rank-cograph-Royal} proved that the rank of every reduced
graph containing no path of length $3$ as an induced subgraph is equal to the order. In \cite{Ghorbani-DiscM-max-order-tree}, the authors proved that every reduced tree of rank $r$ has at most $t(r)=\frac{3r}{2}-1$ vertices and characterized all reduced trees of rank $r$ and order $t(r).$ Besides, they proved that any reduced bipartite graph of rank $r$ has at most $b(r)=2^{\frac{r}{2}}+\frac{r}{2}-1$ vertices. Also, in \cite{Ghorbani-JGT-max-order-triangle-free-graph}, the authors proved that each and every reduced non-bipartite triangle-free graph of rank $r$ has at most $c(r)$
vertices, where $c(r)=3\cdot2^{\lfloor\frac{r}{2}\rfloor-2}+\lfloor\frac{r}{2}\rfloor.$
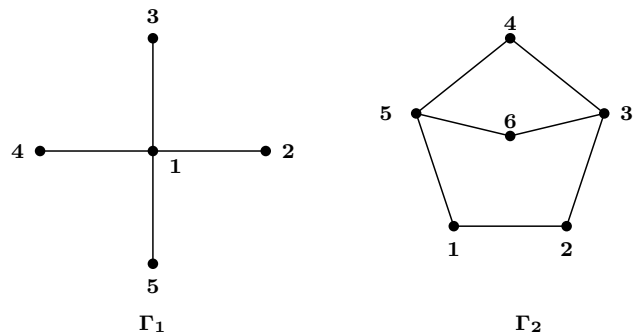
\begin{figure}
			\tiny
			\tikzstyle{ver}=[]
			\tikzstyle{vert}=[circle, draw, fill=black!100, inner sep=0pt, minimum width=4pt]
			\tikzstyle{vertex}=[circle, draw, fill=black!00, inner sep=0pt, minimum width=4pt]
			\tikzstyle{edge} = [draw,thick,-]
			\tikzstyle{node_style} = [circle,draw=blue,fill=blue!20!,font=\sffamily\Large\bfseries]
			\centering
			\begin{tikzpicture}[scale=1]
			\tikzstyle{edge_style} = [draw=black, line width=2mm, ]
			\tikzstyle{node_style} = [draw=blue,fill=blue!00!,font=\sffamily\Large\bfseries]
			\fill[black!100!](4,0) circle (.07);
			\fill[black!100!](5.5,0) circle (.07);
			\fill[black!100!](6,1.5) circle (.07);
			\fill[black!100!](4.75,2.5) circle (.07);
			\fill[black!100!](3.5,1.5) circle (.07);
			\draw[line width=.2 mm] (5.5,0) -- (4,0);
			\draw[line width=.2 mm] (4,0) -- (3.5,1.5);
			\draw[line width=.2 mm] (6,1.5) -- (5.5,0);
			\draw[line width=.2 mm] (4.75,2.5) -- (3.5,1.5);
			\draw[line width=.2 mm] (4.75,2.5) -- (6,1.5);
			%
			\draw[line width=.2 mm] (4.75,1.2) -- (6,1.5);
			\draw[line width=.2 mm] (4.75,1.2) -- (3.5,1.5);
			%
			\fill[black!100!](4.75,1.2) circle (.07);
			\node (B1) at (4,-.3)   {$\bf{1}$};
			\node (B2) at (5.5,-.3)  {$\bf{2}$};
			\node (B3) at (6.3,1.5)   {$\bf{3}$};
			\node (B4) at (4.75,2.7)   {$\bf{4}$};
			\node (A4) at (3.1,1.5)  {$\bf{5}$};
			\node (A4) at (4.75, 1.4)  {$\bf{6}$};
			\node (B1) at (.3,.8)   {$\bf{1}$};
			\node (B2) at (1.8,1)  {$\bf{2}$};
			\node (B3) at (0,2.8)   {$\bf{3}$};
			\node (B4) at (-1.8,1)   {$\bf{4}$};
			\node (A4) at (0,-.8)  {$\bf{5}$};
			\node (A4) at (0, -1.3)  {$\bf{\Gamma_1}$};
			\node (A4) at (5, -1.3)  {$\bf{\Gamma_2}$};
			\fill[black!100!](0,1) circle (.07);
			\fill[black!100!](1.5,1) circle (.07);
			\fill[black!100!](-1.5,1) circle (.07);
			\fill[black!100!](0,-.5) circle (.07);
			\fill[black!100!](0,2.5) circle (.07);
			\draw[line width=.2 mm] (0,1) -- (1.5,1);
			\draw[line width=.2 mm] (0,1) -- (-1.5,1);
			\draw[line width=.2 mm] (0,1) -- (0,2.5);
			\draw[line width=.2 mm] (0,1) -- (0,-.5);
			\end{tikzpicture}
			\caption{An illustration of the conjecture}
			\label{fig:example of conj}	
		\end{figure}
In \cite{Cameron-akbaria-conjecture} Akbari, Cameron, and Khosrovshahi proposed the following conjecture about the row space of graphs. 
\begin{conjecture}[Question 2, \cite{Cameron-akbaria-conjecture}]\label{conj: Cameron on row space}
Let $\Gamma$ be a graph with at least one edge. If $A(\Gamma)$ is the adjacency matrix of $\Gamma,$ then there exists a non-zero $(0,1)$-vector in the row space of $A(\Gamma)$ (over the real numbers) which does not occur as a row of $A(\Gamma).$
\end{conjecture}
If Conjecture \ref{conj: Cameron on row space} is true for all graphs with at least one edge, then by using Theorem $3$ of \cite{rank-vertex-addition} one can see that the function $m(r)$ is an increasing function. This reflects a growing complexity or ``capacity" for graphs with higher rank to accommodate more vertices while still adhering to the conditions imposed by the rank. The increasing nature of $m(r)$ supports the conjecture $m(r)=n(r)$ by demonstrating that the size of graphs with a given rank grows predictably, ensuring that the rank constraints and graph sizes are tightly coupled. Moreover,
This problem bridges graph theory and linear algebra, specifically focusing on how linear combinations of rows can lead to new insights about the graph's structure. Investigating the existence of non-zero $(0,1)$-vectors in the row space could lead to a deeper understanding of how adjacency matrices can encode graph-theoretic information and how graphs with certain properties (such as connectivity or degree distribution) might influence the rank and row space of their adjacency matrices.
\begin{example}
Consider the graphs in Figure \ref{fig:example of conj}. The adjacency matrices of these graphs are \[A(\Gamma_1)=\label{mat: example for conj}
\left(
\begin{array}{ccccc}
		0 &1 & 1&1&1\\
		1 &0 & 0&0&0\\
		1 &0 & 0&0&0\\
		1 &0 & 0&0&0\\
		1 &0 & 0&0&0
		\end{array}
		\right),
		A(\Gamma_2)=	\left(
		\begin{array}{cccccc}
		0 &1 & 0&0&1&0\\
		1 &0 & 1&0&0&0\\
		0 &1 & 0&1&0&1\\
		0 &0 & 1&0&1&0\\
		1 &0 & 0&1&0&1\\
		0 &0& 1&0&1&0 
		\end{array}
		\right)
		.\]
Notice that in the matrix $A(\Gamma_1), R_1+R_2=(1, 1, 1, 1, 1)$ but $(1, 1, 1, 1, 1)$ does not occur as a row of $A(\Gamma_1).$ In case of $A(\Gamma_2), R_1+R_2=(1,1,1,0,1,0)$ but $(1,1,1,0,1,0)$ does not occur as a row of $A(\Gamma_2).$ So, both the matrices $A(\Gamma_1)$ and $A(\Gamma_2)$ satisfy the conjecture. 
\end{example}
In this article, we prove that Conjecture \ref{conj: Cameron on row space} is true for all graphs with diameter $\geq 4.$  Besides, in the remaining two cases that is, for graphs with diameter is equal to $2$ or $3,$ we report some progress in support of the conjecture.
	
The plan of the paper is as follows. In Section \ref{Sec: basic, notation and preliminaries}, we mention some earlier known results. The main results of this article are described in Section \ref{Sec: main results}. Finally, the proofs of the main results are given in Section \ref{Sec: proof of main thm}.
\section{Basic definitions, Notations and Preliminaries}\label{Sec: basic, notation and preliminaries}
For the convenience of the reader and also for later use, we recall some basic definitions and notations about graphs. Let $\Gamma$ be a graph with vertex set $V(\Gamma).$ The \emph{order} and \emph{size} of a graph $\Gamma$ are $|V(\Gamma)|$ and $|E(\Gamma)|$ respectively, where $E(\Gamma)$ is the edge set of $\Gamma.$ Two elements $u$ and $v$ are said to be adjacent (denoted by $u\sim v)$ if there is an edge between them. For a vertex $u,$ we denote by $\text{nbd}(u),$ the set of vertices that are adjacent to $u.$ The degree of $u,$ denoted by $\text{deg}(u),$ is the cardinality of the set $\text{nbd}(u).$ A vertex $v$ is said to be \emph{pendant vertex} if $\text{deg}(v)=1.$ An edge of a graph is said to be \emph{pendant} if one of its vertices is a pendant vertex. A \emph{path} of length $k,$ denoted by $P_{k+1}$ between two vertices $v_0$ and $v_k$ is an alternating sequence of vertices and edges $v_0, e_0, v_1, e_1, v_2, \cdots , v_{k-1}, e_{k-1}, v_k$, where the $v_i'$s are distinct
(except possibly the first and last vertices) and $e_i'$s are the edges $(v_i, v_{i+1}).$  A graph $\Gamma$ is said to be \emph{connected} if for any pair of vertices $u$ and $v,$ there exists a path between $u$ and $v.$ The distance between two vertices $u$ and $v$ in a connected graph $\Gamma$ is the length of the shortest path between them and it is denoted by $d(u, v).$ Clearly, if $u$ and $v$ are adjacent, then $d(u, v)=1.$ For a graph $\Gamma,$ its \emph{diameter} is defined as $\text{diam}(\Gamma)= \max_{u, v \in V} d(u, v).$ That is, the diameter of graph is the largest possible distance between pair of vertices of a graph. $\Gamma$ is said to be \emph{complete} if any two distinct vertices are adjacent. A vertex of a graph $\Gamma$ is called a \emph{dominating vertex} if it is adjacent to every other vertex. The \emph{star} graph is the complete bipartite graph $K_{1, n}, n\in \mathbb{N}.$ For a matrix $M,$ by $R(M)$ and $r(M),$ we mean the row space (over the field $\mathbb{R})$ and the rank of the matrix $M$ respectively. The $i^{\text{th}}$ row of a matrix is denoted by $R_i.$ $M^t$ denotes the transpose of the matrix $M.$ We denote the set $\{1, \cdots, n\}$ by $[n].$ 
	
Now, we shift our attention to recall some earlier known  results on graph operations. For that first we need to define a graph operation called \emph{multiplication of vertices} (see p. 53 of \cite{Graph-multiplication-Golumbic}). 
\begin{definition}[\cite{Graph-multiplication-Golumbic}]\label{def: multiplication of vetex}
Let $m=(m_1, \cdots, m_n)$ be a vector of positive integers. Let $\Gamma$ be a graph with the vertex set $V(\Gamma)=\{v_1, \cdots, v_n\}.$ Then the graph, denoted by $\Gamma\bigodot m,$ is obtained from $\Gamma$ by replacing each vertex
$v_i$ of $\Gamma$ with an independent set of $m_i$ vertices $v^1_{i}\cdots, v^{m_i}_{i}$ and joining $v^s_i$ with $v^t_j$ if and only if $v_i$ and $v_j$ are adjacent in $\Gamma.$ We say that $V_i=\{v^1_{i}, \cdots, v^{m_i}_{i} \}$ is the set of vertices of $\Gamma\bigodot m$ corresponding to $v_i.$ The resulting graph $\Gamma
\bigodot m$ is said to be obtained from $\Gamma$ by multiplication of vertices. By multiplicating the single vertex $v_i$ in $\Gamma$ we shall mean a new graph $\Gamma'$ such that $\Gamma'=\Gamma\bigodot m,$ where $m=(\underbrace{1,\cdots, 1,}_{i-1 \text{ entries }}m_i, 1, \cdots, 1), m_i\geq 1.$ By \emph{duplicating} (defined in \cite{Henning-Michael}) the vertex $v_i$ in $\Gamma$ we shall mean the multiplication $\Gamma\bigodot m,$ where $m=(\underbrace{1,\cdots, 1,}_{i-1 \text{ entries }}2, 1, \cdots, 1),$ (that is, adding a new vertex to the graph $\Gamma$ and joining it to every vertex in $\text{nbd}(v_i)).$ 
If $\Gamma$ contains a vertex of degree $2,$ then we define \emph{degree-$2$ vertex duplication} on $\Gamma$ as the operation that produces a new graph from $\Gamma$ by duplicating any vertex of degree $2.$ Let $\mathcal{M}(\Gamma_1, \cdots, \Gamma_k)$ be the collection of all graphs $\Gamma'$ which can be constructed from one of the graphs in $\{\Gamma_1, \cdots, \Gamma_k\}$	by multiplication of vertices.	
	\end{definition}
	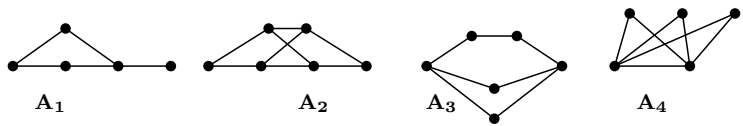
\begin{figure}
		\tiny
		\tikzstyle{ver}=[]
		\tikzstyle{vert}=[circle, draw, fill=black!100, inner sep=0pt, minimum width=4pt]
		\tikzstyle{vertex}=[circle, draw, fill=black!00, inner sep=0pt, minimum width=4pt]
		\tikzstyle{edge} = [draw,thick,-]
		\tikzstyle{node_style} = [circle,draw=blue,fill=blue!20!,font=\sffamily\Large\bfseries]
		\centering
		\begin{tikzpicture}[scale=1]
		\tikzstyle{edge_style} = [draw=black, line width=2mm, ]
		\tikzstyle{node_style} = [draw=blue,fill=blue!00!,font=\sffamily\Large\bfseries]
		\fill[black!100!](0,0) circle (.07);
		\fill[black!100!](.7,0) circle (.07);
		\fill[black!100!](1.4,0) circle (.07);
		\fill[black!100!](2.1,0) circle (.07);
		\fill[black!100!](.7,.5) circle (.07);
		\draw[line width=.2 mm] (0,0) -- (.7,0);
		\draw[line width=.2 mm] (.7,0) -- (1.4,0);
		\draw[line width=.2 mm] (2.1,0) -- (1.4,0);
		\draw[line width=.2 mm] (.7,.5) -- (0,0);
		\draw[line width=.2 mm] (.7,.5) -- (1.4,0);
		\fill[black!100!](2.6,0) circle (.07);
		\fill[black!100!](3.3,0) circle (.07);
		\fill[black!100!](4,0) circle (.07);
		\fill[black!100!](4.7,0) circle (.07);
		\fill[black!100!](3.4,.5) circle (.07);
		\fill[black!100!](3.9,.5) circle (.07);
		\draw[line width=.2 mm] (2.6,0) -- (3.3,0);
		\draw[line width=.2 mm] (3.3,0) -- (4,0);
		\draw[line width=.2 mm] (4,0) -- (4.7,0);
		\draw[line width=.2 mm] (3.4,.5) -- (2.6,0);
		\draw[line width=.2 mm] (3.4,.5) -- (4,0);
		\draw[line width=.2 mm] (3.4,.5) -- (3.9,.5);
		\draw[line width=.2 mm] (3.3,0) -- (3.9,.5);
		\draw[line width=.2 mm] (4.7,0) -- (3.9,.5);
		\fill[black!100!](5.5,0) circle (.07);
		\fill[black!100!](6.1,.4) circle (.07);
		\fill[black!100!](6.7,.4) circle (.07);
		\fill[black!100!](7.3,0) circle (.07);
		\fill[black!100!](6.4,-.3) circle (.07);
		\fill[black!100!](6.4,-.7) circle (.07);
		\draw[line width=.2 mm] (5.5,0) -- (6.1,.4);
		\draw[line width=.2 mm] (6.1,.4) -- (6.7,.4);
		\draw[line width=.2 mm] (6.7,.4) -- (7.3,0);
		\draw[line width=.2 mm] (7.3,0) -- (6.4,-.3);
		\draw[line width=.2 mm] (6.4,-.3) -- (5.5,0);
		\draw[line width=.2 mm] (6.4,-.7) -- (5.5,0);
		\draw[line width=.2 mm] (6.4,-.7) -- (7.3,0);
		\fill[black!100!](8,0) circle (.07);
		\fill[black!100!](9,.0) circle (.07);
		\fill[black!100!](8.2,.7) circle (.07);
		\fill[black!100!](8.9,.7) circle (.07);
		\fill[black!100!](9.6,.7) circle (.07);
		\draw[line width=.2 mm] (8,0) -- (9,.0);
		\draw[line width=.2 mm] (8.2,.7) -- (8,0);
		\draw[line width=.2 mm] (8.2,.7) -- (9,0);
		\draw[line width=.2 mm] (8.9,.7) -- (8,0);
		\draw[line width=.2 mm] (8.9,.7) -- (9,0);
		\draw[line width=.2 mm] (9.6,.7) -- (8,0);
		\draw[line width=.2 mm] (9.6,.7) -- (9,0);
		\node (A4) at (.5,-.5)  {$\bf{A_1}$};
		\node (A4) at (4,-.5)  {$\bf{A_2}$};
		\node (A4) at (5.7,-.5)  {$\bf{A_3}$};
		\node (A4) at (8.5,-.5)  {$\bf{A_4}$};
		\end{tikzpicture}
		\caption{Explanation of multiplication of vertices}
		\label{fig:exampl for vertex multiplication in graphs}	
\end{figure}
\begin{example}
Note that, the graphs $A_1, A_2\in \mathcal{M}(P_4), A_3\in \mathcal{M}(C_5)$ and $A_4\in \mathcal{M}(K_3).$	
\end{example}
Some of the important properties of the graph $\Gamma\bigodot m$ are described below. We will use these properties to prove Conjecture \ref{conj: Cameron on row space}. 
\begin{lemma}[\cite{long-graph-given-rank}] \label{lem: diam fixed by multiplication vertices}
Let $\Gamma\bigodot m$ be a multiplication of a graph $\Gamma.$ If $\Gamma$ is not a complete graph, then $\text{diam}(\Gamma\bigodot m)=\text{diam}(\Gamma).$	
\end{lemma}
\begin{remark}\label{rem: diameter of graph after multi of comple graph}
For the complete graph $K_n,$ $\text{diam}(K_n)=1$ and $\text{diam}(K_n\bigodot m)=2.$ 
\end{remark}
One crucial observation on the rank of the graphs $\Gamma$ and $\Gamma',$ (where $\Gamma'\in \mathcal{M}(\Gamma))$ is as follows.
\begin{proposition}[\cite{LAA-rank-5-graph-charecterization}]\label{Prop: rank of graph and its multipl aresame}	
For graphs $\Gamma$ and $\Gamma'$ if $\Gamma'\in\mathcal{M}(\Gamma),$ then $r(\Gamma')=r(\Gamma).$
\end{proposition}
In this portion, we construct a special family of diameter-$2$ graph by duplicating the vertices. In Section \ref{Sec: proof of main thm}, we will show that this family satisfies the conjecture. 
\begin{example}\label{Exam: duplication and family H}
In this example, we construct a family of diameter $2$ graphs from the cycle $C_5$ and the graph $\Gamma_0$ (where $\Gamma_0$ (depicted in Figure \ref{fig:graph of diam 2;cycle 0f length $5$ and its duplication}) be the graph obtained from a $3$-cycle by adding a pendant edge to each vertex of the cycle and then adding a new vertex and joining it to the three degree $1$
vertices) by duplicating any vertex of degree $2.$ Let $\mathcal{H}$ be the family of graphs that: 
\begin{enumerate}
\item contains $C_5, \Gamma_0$ and the Petersen graph; and 
\item is closed under degree-$2$ vertex duplication,
\end{enumerate}
\begin{figure}
\tiny
\tikzstyle{ver}=[]
			\tikzstyle{vert}=[circle, draw, fill=black!100, inner sep=0pt, minimum width=4pt]
			\tikzstyle{vertex}=[circle, draw, fill=black!00, inner sep=0pt, minimum width=4pt]
			\tikzstyle{edge} = [draw,thick,-]
			\tikzstyle{node_style} = [circle,draw=blue,fill=blue!20!,font=\sffamily\Large\bfseries]
			\centering
			\begin{tikzpicture}[scale=1]
			\tikzstyle{edge_style} = [draw=black, line width=2mm, ]
			\tikzstyle{node_style} = [draw=blue,fill=blue!00!,font=\sffamily\Large\bfseries]
			\node (A4) at (.7,-.5)  {$\bf{\Gamma_1}$};
			\node (A4) at (4.7,-.5)  {$\bf{\Gamma_2}$};
			\node (A4) at (9.7,-.5)  {$\bf{\Gamma_3}$};
			\node (A4) at (1.5,-3.5)  {$\bf{\Gamma_0}$};
			\node (A4) at (6.5,-3.5)  {$\bf{\Gamma_4}$};
			\node (A4) at (11,-3.5)  {$\bf{\Gamma_5}$};
			\fill[red!100!](0,0) circle (.07);
			\fill[red!100!](1.5,0) circle (.07);
			\fill[black!100!](2,1.5) circle (.07);
			\fill[black!100!](0.75,2.5) circle (.07);
			\fill[black!100!](-.5,1.5) circle (.07);
			%
			%
			\draw[line width=.2 mm] (1.5,0) -- (0,0);
			\draw[line width=.2 mm] (0,0) -- (-0.5,1.5);
			\draw[line width=.2 mm] (2,1.5) -- (1.5,0);
			\draw[line width=.2 mm] (.75,2.5) -- (-0.5,1.5);
			\draw[line width=.2 mm] (.75,2.5) -- (2,1.5);
			%
			\fill[red!100!](4,0) circle (.07);
			\fill[red!100!](5.5,0) circle (.07);
			\fill[black!100!](6,1.5) circle (.07);
			\fill[black!100!](4.75,2.5) circle (.07);
			\fill[black!100!](3.5,1.5) circle (.07);
			\draw[line width=.2 mm] (5.5,0) -- (4,0);
			\draw[line width=.2 mm] (4,0) -- (3.5,1.5);
			\draw[line width=.2 mm] (6,1.5) -- (5.5,0);
			\draw[line width=.2 mm] (4.75,2.5) -- (3.5,1.5);
			\draw[line width=.2 mm] (4.75,2.5) -- (6,1.5);
			\draw[line width=.2 mm] (4.75,.75) -- (6,1.5);
			\draw[line width=.2 mm] (4.75,.75) -- (3.5,1.5);
			\draw[line width=.2 mm] (4.75,1.2) -- (6,1.5);
			\draw[line width=.2 mm] (4.75,1.2) -- (3.5,1.5);
			\draw[line width=.2 mm] (4.75,1.8) -- (6,1.5);
			\draw[line width=.2 mm] (4.75,1.8) -- (3.5,1.5);
			\fill[black!100!](4.75,.75) circle (.07);
			\fill[black!100!](4.75,1.2) circle (.07);
			\fill[black!100!](4.75,1.8) circle (.07);
			%
			\fill[red!100!](9,0) circle (.07);
			\fill[red!100!](10.5,0) circle (.07);
			\fill[black!100!](11,1.5) circle (.07);
			\fill[black!100!](9.75,2.5) circle (.07);
			\fill[black!100!](8.5,1.5) circle (.07);
			\draw[line width=.2 mm] (10.5,0) -- (9,0);
			\draw[line width=.2 mm] (9,0) -- (8.5,1.5);
			\draw[line width=.2 mm] (11,1.5) -- (10.5,0);
			\draw[line width=.2 mm] (9.75,2.5) -- (8.5,1.5);
			\draw[line width=.2 mm] (9.75,2.5) -- (11,1.5);
			\draw[line width=.2 mm] (8,1.5) -- (9,0);
			\draw[line width=.2 mm] (8,1.5) -- (9.75,2.5);
			\fill[black!100!](8,1.5) circle (.07);
			\fill[black!100!](7.5,1.5) circle (.07);
			\draw[line width=.2 mm] (7.5,1.5) -- (9,0);
			\draw[line width=.2 mm] (7.5,1.5) -- (9.75,2.5);
			\draw[line width=.2 mm] (11.5,1.5) -- (10.5,0);
			\draw[line width=.2 mm] (11.5,1.5) -- (9.75,2.5);
			\fill[black!100!](11.5,1.5) circle (.07);
			\draw[line width=.2 mm] (12,1.5) -- (10.5,0);
			\draw[line width=.2 mm] (12,1.5) -- (9.75,2.5);
			\fill[black!100!](12,1.5) circle (.07);	
			\draw[line width=.2 mm] (8.8,1.5) -- (9,0);
			\draw[line width=.2 mm] (8.8,1.5) -- (9.75,2.5);
			\fill[black!100!](8.8,1.5) circle (.07);
			\draw[line width=.2 mm] (10.7,1.5) -- (10.5,0);
			\draw[line width=.2 mm] (10.7,1.5) -- (9.75,2.5);
			\fill[black!100!](10.7,1.5) circle (.07);
			\draw[line width=.2 mm] (0,-3) -- (3,-3);
			\draw[line width=.2 mm] (0,-3) -- (1.5,-1);
			\draw[line width=.2 mm] (3,-3) -- (1.5,-1);
			\fill[black!100!](0,-3) circle (.07);
			\fill[black!100!](3,-3) circle (.07);
			\fill[black!100!](1.5,-1) circle (.07);
			\draw[line width=.2 mm] (0,-3) -- (1.5,-2.5);
			\draw[line width=.2 mm] (1.5,-1) -- (1.5,-2.5);
			\draw[line width=.2 mm] (3,-3) -- (1.5,-2.5);
			\fill[red!100!](1.5,-2.5) circle (.07);
			\fill[red!100!](.8,-2.74) circle (.07);
			\fill[black!100!](2.2,-2.74) circle (.07);
			\fill[black!100!](1.5,-1.9) circle (.07);
			%
			\draw[line width=.2 mm] (5,-3) -- (8,-3);
			\draw[line width=.2 mm] (5,-3) -- (6.5,-1);
			\draw[line width=.2 mm] (8,-3) -- (6.5,-1);
			\fill[black!100!](5,-3) circle (.07);
			\fill[black!100!](8,-3) circle (.07);
			\fill[black!100!](6.5,-1) circle (.07);
			\draw[line width=.2 mm] (5,-3) -- (6.5,-2.5);
			\draw[line width=.2 mm] (6.5,-1) -- (6.5,-2.5);
			\draw[line width=.2 mm] (8,-3) -- (6.5,-2.5);
			\fill[red!100!](6.5,-2.5) circle (.1);
			\fill[red!100!](5.8,-2.74) circle (.07);
			\fill[black!100!](7.2,-2.74) circle (.07);
			\fill[black!100!](6.5,-1.9) circle (.07);
			\fill[black!100!](6.25,-1.9) circle (.07);
			\fill[black!100!](6.75,-1.9) circle (.07);
			\draw[line width=.2 mm] (6.25,-1.9) -- (6.5,-1);
			\draw[line width=.2 mm] (6.25,-1.9) -- (6.5,-2.5);
			\draw[line width=.2 mm] (6.75,-1.9) -- (6.5,-1);
			\draw[line width=.2 mm] (6.75,-1.9) -- (6.5,-2.5);
			\fill[black!100!](5.8,-2.4) circle (.07);
			\draw[line width=.2 mm] (6.5,-2.5) -- (5.8,-2.4);
			\draw[line width=.2 mm] (5,-3) -- (5.8,-2.4);
			\fill[black!100!](6,-2.85) circle (.06);
			\draw[line width=.2 mm] (6.5,-2.5) -- (6,-2.85);
			\draw[line width=.2 mm] (5,-3) -- (6,-2.85);
			\fill[black!100!](7.2,-2.4) circle (.07);
			\draw[line width=.2 mm] (6.5,-2.5) -- (7.2,-2.4);
			\draw[line width=.2 mm] (8,-3) -- (7.2,-2.4);
			\fill[black!100!](7,-2.85) circle (.06);
			\draw[line width=.2 mm] (6.5,-2.5) -- (7,-2.85);
			\draw[line width=.2 mm] (8,-3) -- (7,-2.85);
			\fill[red!100!](10,-3) circle (.07);
			\fill[red!100!](11.5,-3) circle (.07);
			\fill[black!100!](12,-1.5) circle (.07);
			\fill[black!100!](10.75,-.5) circle (.07);
			\fill[black!100!](9.5,-1.5) circle (.07);
			%
			%
			\draw[line width=.2 mm] (11.5,-3) -- (10,-3);
			\draw[line width=.2 mm] (10,-3) -- (9.5,-1.5);
			\draw[line width=.2 mm] (12,-1.5) -- (11.5,-3);
			\draw[line width=.2 mm] (10.75,-.5) -- (9.5,-1.5);
			\draw[line width=.2 mm] (10.75,-.5) -- (12,-1.5);	
			\fill[black!100!](10.3,-2.5) circle (.07);
			\fill[black!100!](11.2,-2.5) circle (.07);
			\fill[black!100!](10.3,-1.7) circle (.07);
			\fill[black!100!](10.75,-1.2) circle (.07);
			\fill[black!100!](11.2,-1.7) circle (.07);
			\draw[line width=.2 mm] (10.3,-2.5) -- (11.2,-1.7);
			\draw[line width=.2 mm]  (11.2,-1.7) -- (10.3,-1.7);
			\draw[line width=.2 mm] (10.3,-2.5) -- (10.75,-1.2);
			\draw[line width=.2 mm] (11.2,-2.5) -- (10.75,-1.2);	
			\draw[line width=.2 mm] (11.2,-2.5) -- (10.3,-1.7);	
			\draw[line width=.2 mm] (10.3,-2.5) -- (10,-3);	
			\draw[line width=.2 mm]  (11.2,-1.7) -- (12,-1.5);
			\draw[line width=.2 mm] (10.75,-.5) -- (10.75,-1.2);
			\draw[line width=.2 mm] (11.2,-2.5) -- (11.5,-3);	
			\draw[line width=.2 mm] (9.5,-1.5) -- (10.3,-1.7);	
\end{tikzpicture}
\caption{ $\Gamma_1=C_5, \Gamma_2$ is the graph obtained from $C_5$ by duplicating three degree-$2$ vertices, and $\Gamma_3$ is obtained from $C_5$ by duplicating six degree-$2$ vertices. Similarly, the graph $\Gamma_4$ is obtained from $\Gamma_0$ by duplicating six degree-$2$ vertices and $\Gamma_5$ is the Petersen graph.}
\label{fig:graph of diam 2;cycle 0f length $5$ and its duplication}	
\end{figure}
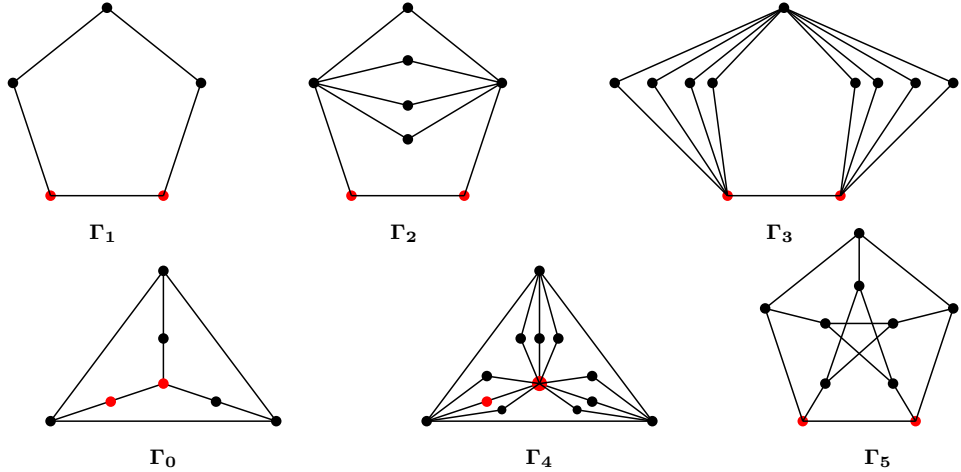	
\end{example}
In \cite{ERDOS-RENYEL}, Erd\H{o}s and R\'{e}nyi, proved the classical result (see Theorem \ref{Thm: s geq 2n-5, by ERDOS-RENYEL}) on the minimum size of a diameter-$2$ graph with no dominating vertex.
\begin{theorem}[\cite{ERDOS-RENYEL}]\label{Thm: s geq 2n-5, by ERDOS-RENYEL}
If $\Gamma$ is a diameter-$2$ graph of order $n$ and size $s$ with no dominating vertex, then $s\geq 2n-5.$
\end{theorem}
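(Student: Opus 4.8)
The plan is to start from the defining feature of a diameter-$2$ graph: every pair of distinct non-adjacent vertices has a common neighbour. Counting paths of length $2$ (``cherries'') this condition reads
\[
\sum_{u\in V(\Gamma)}\binom{\deg(u)}{2}\;\ge\;\binom{n}{2}-s ,
\]
since the left-hand side equals $\sum_{\{u,u'\}}|\mathrm{nbd}(u)\cap\mathrm{nbd}(u')|$ and each of the $\binom{n}{2}-s$ non-adjacent pairs contributes at least one common neighbour. Expanding gives $\sum_u\deg(u)^2\ge n(n-1)$. This global inequality is clean, but (as one sees by allowing a couple of vertices of degree $n-2$) it is far too weak on its own; it only signals that the real work is local, near a vertex of small degree. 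So I would instead argue through the minimum degree $\delta=\delta(\Gamma)$.

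First come the easy reductions. As $\Gamma$ is connected we have $\delta\ge 1$, and in fact $\delta\ge 2$: if $\deg(v)=1$ with $\mathrm{nbd}(v)=\{u\}$, then every other vertex, being at distance $2$ from $v$, must be adjacent to the unique neighbour $u$ of $v$, so $u$ is a dominating vertex, contradicting the hypothesis. Next, if $\delta\ge 4$ then $2s=\sum_u\deg(u)\ge \delta n\ge 4n$, whence $s\ge 2n>2n-5$ and we are done. Thus it remains to treat $\delta\in\{2,3\}$.

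For these two cases I would fix a vertex $v$ of minimum degree, set $A=\mathrm{nbd}(v)$ (so $|A|=\delta\in\{2,3\}$) and let $B=V(\Gamma)\setminus(\{v\}\cup A)$, with $|B|=n-1-\delta$. Because $\Gamma$ has diameter $2$, every $b\in B$ is non-adjacent to $v$ and so has a neighbour in $A$; partitioning $B$ according to which subset of $A$ a vertex attaches to, the edges between $A$ and $B$ already number at least $|B|$. The heart of the argument is to show that diameter $2$ forces many further edges inside $B$: if $b\in B$ is non-adjacent to some $a\in A$, their obligatory common neighbour must lie in $B$ unless $A$ itself spans an edge covering this pair, and tracking these certificates produces roughly one extra edge of $B$ per vertex of $B$. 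Pushing this bookkeeping to the end is what yields $s\ge 2n-5$.

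The main obstacle is precisely this last step. A crude accounting using only $\deg(u)\ge\delta$ together with ``each $b$ has a neighbour in $A$'' stalls at about $s\ge\tfrac{3n}{2}$, which is insufficient; one must genuinely use that \emph{every} non-adjacent pair inside $B$, and between $A$ and $B$, has a common neighbour, and then organise the resulting case distinctions --- chiefly whether the $\delta$ vertices of $A$ are mutually adjacent, and how the common-neighbour certificates for the various pairs overlap so that they are not all provided by a few high-degree vertices. The extremal configuration to keep in mind, which shows that the constant $5$ cannot be improved, is the $5$-cycle $C_5$ with one vertex multiplied into an independent set of size $n-4$: by Lemma \ref{lem: diam fixed by multiplication vertices} this graph has diameter $2$, it has no dominating vertex, its minimum degree is $2$, and a direct count gives exactly $2n-5$ edges. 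This example pins down which arrangements of $A$, $B$, and the edges inside $B$ are the tight ones that the case analysis for $\delta\in\{2,3\}$ must accommodate.
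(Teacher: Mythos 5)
Your attempt has a genuine gap, and it is one you yourself concede. Note first that the paper offers no proof of this statement to compare against: it is imported verbatim from Erd\H{o}s and R\'{e}nyi as a known classical result. So your argument must stand on its own, and it does not. The parts you do carry out are correct: the cherry-counting inequality $\sum_{u}\binom{\deg(u)}{2}\geq\binom{n}{2}-s$ is valid for diameter-$2$ graphs; the exclusion of $\delta=1$ (a pendant vertex would force its unique neighbour to be dominating) is right; and the case $\delta\geq 4$ follows at once from $2s=\sum_u\deg(u)\geq 4n$. But the entire content of the theorem lives in the cases $\delta\in\{2,3\}$, and there your text stops being a proof. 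You describe what the ``heart of the argument'' would have to accomplish (``tracking these certificates produces roughly one extra edge of $B$ per vertex of $B$,'' ``pushing this bookkeeping to the end is what yields $s\geq 2n-5$'') and then state that ``the main obstacle is precisely this last step,'' acknowledging that the counts you actually have stall near $\frac{3n}{2}$. Naming the obstacle is not the same as overcoming it: you never fix the partition of $B$ by attachment sets in $A$, never split on whether $A$ spans edges, and never produce the actual count of edges inside $B$, nor do you address the overlap problem you correctly identify (that the common-neighbour certificates for many non-adjacent pairs could all be supplied by a few high-degree vertices, so ``one new edge per pair'' cannot be claimed naively). These cases are exactly where the extremal graphs live --- your own example, $C_5$ with one vertex multiplied into an independent set of size $n-4$, has minimum degree $2$ --- so no shortcut around them is possible.

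What you have is a sensible reduction plus a correct tightness example (the count $2(n-4)+3=2n-5$ for the multiplied $C_5$ checks out, and it lies in the family $\mathcal{H}$ of Theorem \ref{thm H-M: dim2 m  geq 2n-5}), but the inequality itself remains unproved for $\delta\in\{2,3\}$. To close the gap you would need to reproduce the Erd\H{o}s--R\'{e}nyi case analysis in full --- for instance, for $\delta=2$ with $A=\{a_1,a_2\}$, distinguishing whether $a_1\sim a_2$, classifying each $b\in B$ by whether it meets $a_1$, $a_2$, or both, and then using the common-neighbour requirement for the non-adjacent pairs $(b,a_i)$ and for pairs inside $B$ to extract, with multiplicity accounted for, the missing $n+O(1)$ edges --- or else simply cite the original paper, as the author does.
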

After that in \cite{Henning-Michael}, Henning and Southey classified all graphs $\Gamma$ such that $s=2n-5.$ In fact, they proved the following:
\begin{theorem}[\cite{Henning-Michael}]\label{thm H-M: dim2 m  geq 2n-5}
If $\Gamma$ is a diameter-$2$ graph of order $n$ and size $s$ with no dominating vertex, then $s\geq 2n-5$ with equality if and only if $\Gamma\in\mathcal{H}.$ 
\end{theorem}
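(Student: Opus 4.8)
The statement combines the inequality $s \geq 2n-5$---already available as Theorem \ref{Thm: s geq 2n-5, by ERDOS-RENYEL}---with a classification of the equality case, so the plan is to concentrate on showing that the extremal graphs, i.e.\ the diameter-$2$ graphs with no dominating vertex and exactly $2n-5$ edges, are precisely the members of $\mathcal{H}$. I would split this into the routine inclusion ``every member of $\mathcal{H}$ is extremal'' and the substantive reverse inclusion ``every extremal graph lies in $\mathcal{H}$''.

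For the forward inclusion I would argue by induction on the number of degree-$2$ duplications used to build a member of $\mathcal{H}$. First I would check the three base graphs by hand: $C_5$ has $(n,s)=(5,5)$, $\Gamma_0$ has $(n,s)=(7,9)$, and the Petersen graph has $(n,s)=(10,15)$, so each satisfies $s=2n-5$, each has diameter $2$, and none has a dominating vertex. For the inductive step, duplicating a degree-$2$ vertex $v$ with $\text{nbd}(v)=\{a,b\}$ adds exactly one vertex and two edges, so the identity $s=2n-5$ is preserved; since the resulting graph is a multiplication $\Gamma\bigodot m$ of a non-complete graph, Lemma \ref{lem: diam fixed by multiplication vertices} keeps the diameter equal to $2$; and a dominating vertex of the enlarged graph would have to be adjacent to both $v$ and its copy, hence lie in $\{a,b\}$, and would already dominate $\Gamma$, which is impossible. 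This settles the ``if'' direction.

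For the reverse inclusion I would induct on $n$. Given an extremal $\Gamma$, I would first record that $\delta(\Gamma)\geq 2$, since a vertex of degree $1$ forces its unique neighbour to be dominating. The core dichotomy is whether $\Gamma$ contains two non-adjacent degree-$2$ vertices $u,u'$ with $\text{nbd}(u)=\text{nbd}(u')=\{a,b\}$. If it does, deleting $u'$ yields a graph $\Gamma'$ on $n-1$ vertices with $s-2=2(n-1)-5$ edges; I would then verify that $\Gamma'$ is still extremal, namely that it has diameter $2$ (the common neighbour $u$ survives, so $a$ and $b$ are still at distance $\leq 2$, and completeness is excluded) and no dominating vertex (a dominating vertex of $\Gamma'$ would be adjacent to the surviving degree-$2$ vertex $u$, hence lie in $\{a,b\}$, and would then dominate $\Gamma$). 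By induction $\Gamma'\in\mathcal{H}$, and since $\Gamma$ is recovered from $\Gamma'$ by duplicating $u$, we get $\Gamma\in\mathcal{H}$.

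The main obstacle---and the heart of the argument---is the irreducible case, where no such false-twin pair of degree-$2$ vertices exists. Here I would sharpen the Erd\H{o}s--R\'enyi edge count: starting from a vertex of minimum degree and analysing its neighbourhood together with the vertices at distance exactly $2$, the equality $s=2n-5$ should rigidly pin down the local structure. I expect the analysis to split according to whether $\delta(\Gamma)=2$ or $\delta(\Gamma)\geq 3$, with the degree-$2$ subcase forcing $\Gamma\in\{C_5,\Gamma_0\}$ and the case with no degree-$2$ vertex forcing $\Gamma$ to be the Petersen graph. Establishing that these are the only irreducible extremal graphs, and ruling out the many near-extremal local configurations that the counting permits \emph{a priori}, is where essentially all of the difficulty lies.
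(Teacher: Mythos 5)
This statement is not proved in the paper at all: it is imported verbatim from Henning and Southey \cite{Henning-Michael}, and the paper uses it as a black box (in the proof of Theorem \ref{thm: dim 2 graph m=2n-5}). So the relevant question is whether your blind attempt actually constitutes a proof, and it does not. Your forward inclusion (members of $\mathcal{H}$ are extremal) is correct and complete: the three base graphs satisfy $s=2n-5$, degree-$2$ duplication adds one vertex and two edges so equality is preserved, Lemma \ref{lem: diam fixed by multiplication vertices} preserves the diameter, and your argument that a dominating vertex of the duplicated graph would already dominate the original is sound. The reducible step of your reverse inclusion (deleting one of two non-adjacent false-twin degree-$2$ vertices and invoking induction) is also correct as stated.

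The genuine gap is the irreducible case, which you yourself flag as ``where essentially all of the difficulty lies'': you give no argument that an extremal graph with no pair of non-adjacent degree-$2$ vertices sharing both neighbours must be $C_5$, $\Gamma_0$, or the Petersen graph. You only state what you ``expect'' the case analysis to yield. This is precisely the content of the Henning--Southey theorem; everything you do prove (the inductive reduction and the closure properties) is the routine shell around it. In the actual literature this step requires a substantial structural analysis of diameter-$2$ graphs meeting the Erd\H{o}s--R\'enyi bound with equality --- it does not fall out of a local degree count around a minimum-degree vertex, and in particular the appearance of the Petersen graph reflects a Moore-graph-like rigidity that needs a dedicated argument. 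As written, your proposal reduces the theorem to its hardest sub-claim and then asserts that sub-claim without proof, so it cannot be accepted as a proof of the statement.
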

We next prove some important results which are used to prove our main theorems. 
\begin{lemma}\label{lem: nbd umeet nbd v empty after multiplication}
Let $\Gamma$ be a simple connected graph with $|V(\Gamma)|=n.$ Suppose $\Gamma$ has two vertices $u, v$ such that $u\sim v$ and $\text{nbd}(u)\cap \text{nbd}(v)=\emptyset.$ Then also in the graph $\Gamma\bigodot m,$ there exist two vertices $u', v'$ such that $u'\sim v'$ and $\text{nbd}(u')\cap \text{nbd}(v')=\emptyset,$ where $m=(m_1, \cdots, m_n),$ and $m_i\geq 1,$ for all $i\in [n].$
\end{lemma}
\begin{proof}
Definition \ref{def: multiplication of vetex} implies that there exist two vertices $u', v'$ such that $u'\sim v'$ in $\Gamma\bigodot m.$ Let $v''\in V(\Gamma\bigodot m)$ such that $v''\in \text{nbd}(u')\cap\text{nbd}(v')$ in $\Gamma\bigodot m.$ Then, $u'\sim v''\sim v'$ in $\Gamma\bigodot m.$ Therefore, by Definition \ref{def: multiplication of vetex}, $\Gamma$ must have a vertex $w$ (say) such that $u\sim w\sim v.$ That is, $w\in \text{nbd}(u)\cap\text{nbd}(v)$ in $\Gamma,$ a contradiction. 	
\end{proof}
Now we propose a lemma regarding the row space of the adjacency matrix $A(\Gamma\bigodot m)$ of the graph $\Gamma\bigodot m,$ where $m=(m_1, \cdots, m_n), m_i\geq 1 \text{ for each } i\in [n].$
\begin{lemma}\label{lem: v is in row space of graph impl v,some extra is in row space of graph with multiplication}
Let $\Gamma$ be a simple connected graph such that $V(\Gamma)=\{v_1, \cdots v_n\}.$ Let $V=(x_1, \cdots, x_n)\in R(A(\Gamma)),$ where $x_i=0$ or $1,$ for each $i\in [n].$ Then for any $m=(m_1, \cdots, m_n),$ the vector $V'=(\underbrace{x_1, \cdots, x_1}_{m_1 \text{ times }}, \underbrace{x_2, \cdots, x_2}_{m_2 \text{ times }}, \cdots, \underbrace{x_n, \cdots, x_n}_{m_n \text{ times }})\in R(A(\Gamma\bigodot m)).$ 	
\end{lemma}
\begin{proof}
Let	
\begin{equation}
A(\Gamma)=	\left(
\begin{array}{cccc}
0 &a_{1,2} &\cdots&a_{1,n}\\
a_{2,1}&0 &\cdots&a_{2,n}\\
\vdots &\vdots&\ddots&\vdots\\
a_{n,1}&a_{n,2} &\cdots&0 
\end{array}
\right) 
\end{equation}
with respect to the vertex order $v_1, v_2, \cdots, v_n,$
where $a_{i, j}\in\{0, 1\},$ for all $i, j\in [n].$	
Since, $(x_1, \cdots, x_n)\in R(A(\Gamma)),$ we have $c_1, \cdots, c_n\in \mathbb{R}$ such that 
\begin{equation}
\left(
\begin{array}{cccc}
0 &a_{1,2} &\cdots&a_{1,n}\\
a_{2,1}&0 &\cdots&a_{2,n}\\
\vdots &\vdots&\ddots&\vdots\\
a_{n,1}&a_{n,2} &\cdots&0 
\end{array}
\right)	\left(\begin{array}{cccc}
c_1\\
c_2\\
\vdots\\
c_n
\end{array}
\right)=\left(\begin{array}{cccc}
x_1\\
x_2\\
\vdots\\
x_n
\end{array}
\right).
\end{equation}
Now, we arrange the vertices of the graph $\Gamma\bigodot m$ in the following order:
\[v_1^{1}, \cdots, v_1^{m_1}, v_2^{1}, \cdots, v_2^{m_2}, \cdots, v_n^{1}, \cdots, v_n^{m_n}.\]
Then using Definition \ref{def: multiplication of vetex}, it can be shown that \begin{equation}
A(\Gamma\bigodot m)=\left(
\begin{array}{cccc}
A_{11} &A_{12} &\cdots&A_{1n}\\
A_{21}&A_{22} &\cdots&A_{2n}\\
\vdots &\vdots&\ddots&\vdots\\
A_{n1}&A_{n2} &\cdots&A_{nn} 
\end{array}
\right), \text{ where }
\end{equation}
\begin{equation}
A_{ij}=
\begin{cases}
O_{m_i\times m_j}, & \text{ if }  a_{i,j}=0 \\
E_{m_i\times m_j}, & \text{ if }  a_{i,j}=1 
\end{cases}
\end{equation}
and by $O_{m_i\times m_j}$ (and $E_{m_i\times m_j})$ we mean that the matrix in which the $(i,j)^{\text{th}}$ entry is $0$ (and $1)$ for all $i, j\in [n].$ Let 
$C_i=\left(\begin{array}{cccc}
c_i\\
0\\
\vdots\\
0
\end{array}
\right) \text{ and } X_i=\left(\begin{array}{cccc}
x_i\\
x_i\\
\vdots\\
x_i
\end{array}
\right)$
be two $m_i\times 1$ matrices, for each $i\in [n].$ Then it is easy to see that,
\begin{equation}
\left(
\begin{array}{cccc}
A_{11} &A_{12} &\cdots&A_{1n}\\
A_{21}&A_{22} &\cdots&A_{2n}\\
\vdots &\vdots&\ddots&\vdots\\
A_{n1}&A_{n2} &\cdots&A_{nn} 
\end{array}
\right)\left(\begin{array}{cccc}
C_1\\
C_2\\
\vdots\\
C_n
\end{array}
\right)=\left(\begin{array}{cccc}
X_1\\
X_2\\
\vdots\\
X_n
\end{array}
\right).
\end{equation}
Thus, the vector $V'=(\underbrace{x_1, \cdots, x_1}_{m_1 \text{ times }}, \underbrace{x_2, \cdots, x_2}_{m_2 \text{ times }}, \cdots, \underbrace{x_n, \cdots, x_n}_{m_n \text{ times }})\in R(A(\Gamma\bigodot m)).$ 
\end{proof}
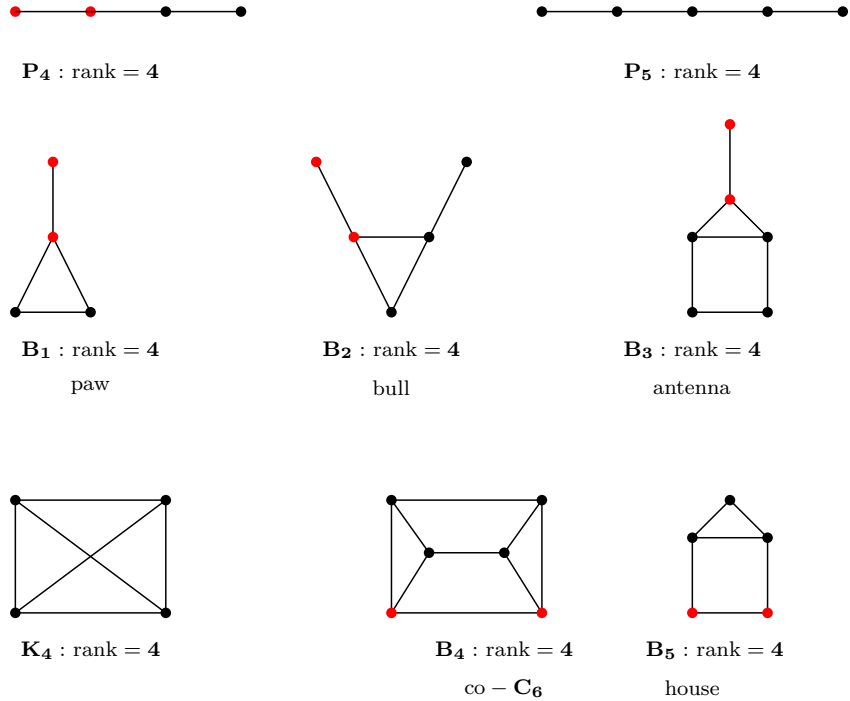
\begin{figure}
		\tiny
		\tikzstyle{ver}=[]
		\tikzstyle{vert}=[circle, draw, fill=black!100, inner sep=0pt, minimum width=4pt]
		\tikzstyle{vertex}=[circle, draw, fill=black!00, inner sep=0pt, minimum width=4pt]
		\tikzstyle{edge} = [draw,thick,-]
		\tikzstyle{node_style} = [circle,draw=blue,fill=blue!20!,font=\sffamily\Large\bfseries]
		\centering
		\begin{tikzpicture}[scale=1]
		\tikzstyle{edge_style} = [draw=black, line width=2mm, ]
		\tikzstyle{node_style} = [draw=blue,fill=blue!00!,font=\sffamily\Large\bfseries]
		\fill[red!100!](0,0) circle (.07);
		\fill[red!100!](1,0) circle (.07);
		\fill[black!100!](2,0) circle (.07);
		\fill[black!100!](3,0) circle (.07);
		\fill[black!100!](7,0) circle (.07);
		\fill[black!100!](8,0) circle (.07);
		\fill[black!100!](9,0) circle (.07);
		\fill[black!100!](10,0) circle (.07);
		\fill[black!100!](11,0) circle (.07);
		%
		\node (A4) at (1,-.8)  {$\bf{P_4:} \text{ rank}=4$};
		\node (A4) at (9,-.8)  {$\bf{P_5:} \text{ rank}=4$};
		\draw[line width=.2 mm] (0,0) -- (1,0);
		\draw[line width=.2 mm] (1,0) -- (2,0);
		\draw[line width=.2 mm] (2,0) -- (3,0);
		%
		\draw[line width=.2 mm] (7,0) -- (8,0);
		\draw[line width=.2 mm] (8,0) -- (9,0);
		\draw[line width=.2 mm] (9,0) -- (10,0);
		\draw[line width=.2 mm] (10,0) -- (11,0);
		\draw[line width=.2 mm] (0,-4) -- (1,-4);
		\draw[line width=.2 mm] (1,-4) -- (.5,-3);
		\draw[line width=.2 mm] (0,-4) -- (.5,-3);
		\draw[line width=.2 mm] (.5,-2) -- (.5,-3);
		\fill[black!100!](0,-4) circle (.07);
		\fill[black!100!](1,-4) circle (.07);
		\fill[red!100!](.5,-3) circle (.07);
		\fill[red!100!](.5,-2) circle (.07);
		\node (A4) at (1,-4.5)  {$\bf{B_1:} \text{ rank}=4$};
		\node (A4) at (1,-5)  {$\bf{\text{paw}}$};
		\draw[line width=.2 mm] (4,-2) -- (4.5,-3);
		\draw[line width=.2 mm] (5,-4) -- (4.5,-3);
		\draw[line width=.2 mm] (5,-4) -- (5.5,-3);
		\draw[line width=.2 mm] (5.5,-3) -- (6,-2);
		\draw[line width=.2 mm] (4.5,-3) -- (5.5,-3);
		\fill[red!100!](4,-2) circle (.07);
		\fill[black!100!](5,-4) circle (.07);
		\fill[red!100!](4.5,-3) circle (.07);
		\fill[black!100!](5.5,-3) circle (.07);
		\fill[black!100!](6,-2) circle (.07);
		\node (A4) at (5,-4.5)  {$\bf{B_2:} \text{ rank}=4$};
		\node (A4) at (5,-5)  {$\bf{\text{ bull }}$};
		\draw[line width=.2 mm] (9.5,-2.5) -- (9.5,-1.5);
		\draw[line width=.2 mm] (9.5,-2.5) -- (9,-3);
		\draw[line width=.2 mm] (9.5,-2.5) -- (10,-3);
		\draw[line width=.2 mm] (9,-4) -- (10,-4);
		\draw[line width=.2 mm] (9,-4) -- (9,-3);
		\draw[line width=.2 mm] (10,-4) -- (10,-3);
		\draw[line width=.2 mm] (9,-3) -- (10,-3);
		\fill[red!100!](9.5,-1.5) circle (.07);
		\fill[red!100!](9.5,-2.5) circle (.07);
		\fill[black!100!](9,-4) circle (.07);
		\fill[black!100!](9,-3) circle (.07);
		\fill[black!100!](10,-3) circle (.07);
		\fill[black!100!](10,-4) circle (.07);
		\node (A4) at (9,-4.5)  {$\bf{B_3:} \text{ rank}=4$};
		\node (A4) at (9,-5)  {$\bf{\text{antenna}} $};
		\draw[line width=.2 mm] (9.5,-6.5) -- (9,-7);
		\draw[line width=.2 mm] (9.5,-6.5) -- (10,-7);
		\draw[line width=.2 mm] (9,-8) -- (10,-8);
		\draw[line width=.2 mm] (9,-8) -- (9,-7);
		\draw[line width=.2 mm] (10,-8) -- (10,-7);
		\draw[line width=.2 mm] (9,-7) -- (10,-7);
		\fill[black!100!](9.5,-6.5) circle (.07);
		\fill[red!100!](9,-8) circle (.07);
		\fill[black!100!](9,-7) circle (.07);
		\fill[black!100!](10,-7) circle (.07);
		\fill[red!100!](10,-8) circle (.07);
		\node (A4) at (9.3,-8.5)  {$\bf{B_5:} \text{ rank}=4$};
		\node (A4) at (9,-9)  {$\bf{\text{house}} $};
		\draw[line width=.2 mm] (0,-8) -- (2,-8);
		\draw[line width=.2 mm] (0,-8) -- (0,-6.5);
		\draw[line width=.2 mm] (2,-8) -- (2,-6.5);
		\draw[line width=.2 mm] (0,-6.5) -- (2,-6.5);
		\draw[line width=.2 mm] (0,-6.5) -- (2,-8);
		\draw[line width=.2 mm] (0,-8) -- (2,-6.5);
		\fill[black!100!](0,-8) circle (.07);
		\fill[black!100!](2,-8) circle (.07);
		\fill[black!100!](2,-6.5) circle (.07);
		\fill[black!100!](0,-6.5) circle (.07);
		\node (A4) at (1,-8.5)  {$\bf{K_4:} \text{ rank}=4$};
		\draw[line width=.2 mm] (5,-8) -- (7,-8);
		\draw[line width=.2 mm] (5,-8) -- (5,-6.5);
		\draw[line width=.2 mm] (7,-8) -- (7,-6.5);
		\draw[line width=.2 mm] (5,-6.5) -- (7,-6.5);
		\draw[line width=.2 mm] (5.5,-7.2) -- (5,-8);
		\draw[line width=.2 mm] (5.5,-7.2) -- (5,-6.5);
		\draw[line width=.2 mm] (5.5,-7.2) -- (6.5,-7.2);
		\draw[line width=.2 mm] (7,-8) -- (6.5,-7.2);
		\draw[line width=.2 mm] (7,-6.5) -- (6.5,-7.2);
		\fill[black!100!](5.5,-7.2) circle (.07);
		\fill[black!100!](6.5,-7.2) circle (.07);
		\fill[red!100!](5,-8) circle (.07);
		\fill[red!100!](7,-8) circle (.07);
		\fill[black!100!](7,-6.5) circle (.07);
		\fill[black!100!](5,-6.5) circle (.07);
		\node (A4) at (6.5,-8.5)  {$\bf{B_4:} \text{ rank}=4$};
		\node (A4) at (6.5,-9)  {$\bf{\text{co}-C_6} $};
		\end{tikzpicture}
		\caption{Reduced graphs of rank $4$}
		\label{fig:rank 4 graphs}	
	\end{figure}
With the notation and terminology introduced above we can state the characterization of graphs $\Gamma$ having $r(\Gamma)\leq 5.$ First, we state the result on the rank of paths and cycles.   
\begin{lemma}[\cite{book-spectra-doob}]\label{Lemm: rank of path and cycle}
	Let $P_n$ and $C_n$ be a path and a cycle on $n$ vertices, respectively. Then
	\begin{equation*}
	r(P_n)=	
	\begin{cases}
	n, & \text{ if } n \text{ is even } \\
	n-1,& \text{ if } n \text{ is odd }
	\end{cases}	
	\end{equation*}
	and
	\begin{equation*}
	r(C_n)=	
	\begin{cases}
	n-2, & \text{ if } n \text{ is a  multiple of  }  4 \\
	n,& \text{ otherwise }
	\end{cases}	
	\end{equation*}
\end{lemma} 
\begin{theorem}[\cite{Nulity-of-graphs-ELA,Nulity-bicyclic-graph,Rank-of-graphs-Sc}]\label{thm: cha of graphs rank leq 3}
	Let $\Gamma$ be a simple connected graph, then 
	\begin{enumerate}
		\item $r(\Gamma)=2$ if and only if $\Gamma\in \mathcal{M}(K_2)$
		\item $r(\Gamma)=3$ if and only if $\Gamma\in \mathcal{M}(K_3).$
	\end{enumerate}	
\end{theorem}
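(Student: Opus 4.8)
The plan is to treat the two equivalences separately; in each case the ``if'' direction is immediate and the ``only if'' direction carries all the work. For ``if'' I would simply invoke Proposition \ref{Prop: rank of graph and its multipl aresame}: since $r(K_2)=2$ and $r(K_3)=3$, every $\Gamma\in\mathcal{M}(K_2)$ has rank $2$ and every $\Gamma\in\mathcal{M}(K_3)$ has rank $3$. The substance is therefore to show that a connected graph of rank $2$ is complete bipartite (hence in $\mathcal{M}(K_2)$) and that a connected graph of rank $3$ is complete tripartite (hence in $\mathcal{M}(K_3)$). Throughout I would use the basic monotonicity that the adjacency matrix of an induced subgraph $H$ is a principal submatrix of $A(\Gamma)$, so $r(H)\le r(\Gamma)$; this lets me forbid any induced subgraph whose rank exceeds the target.

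For $r(\Gamma)=2$, first note that an induced triangle would contribute a principal copy of $J_3-I_3$ of rank $3$, so $\Gamma$ is triangle-free. Pick any edge $uv$; triangle-freeness gives $\text{nbd}(u)\cap\text{nbd}(v)=\emptyset$, and reading off the $u$- and $v$-columns shows $R_u,R_v$ are linearly independent, hence a basis of the row space. Expanding an arbitrary row as $R_w=\alpha R_u+\beta R_v$ and evaluating at columns $u$ and $v$ forces $\alpha=[w\sim v]$ and $\beta=[w\sim u]$; connectivity rules out $R_w=0$, so every $w$ satisfies $R_w=R_u$ or $R_w=R_v$. This partitions $V(\Gamma)$ into two neighborhood classes, which I would then verify are independent sets that are mutually completely joined, i.e. $\Gamma=K_{a,b}\in\mathcal{M}(K_2)$.

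For $r(\Gamma)=3$, I would first produce a triangle: a bipartite graph has even rank, so $\Gamma$ is non-bipartite and its shortest odd cycle is chordless; by Lemma \ref{Lemm: rank of path and cycle} an induced $C_\ell$ with odd $\ell\ge 5$ would force $r(\Gamma)\ge 5$, so the shortest odd cycle is a triangle $\{a,b,c\}$. The principal submatrix on $\{a,b,c\}$ is again $J_3-I_3$, so $R_a,R_b,R_c$ are independent and form a basis. The key structural step is then to classify the other vertices: a vertex adjacent to all of $a,b,c$ would create an induced $K_4$ (rank $4$), while a vertex adjacent to exactly one of them would create an induced paw (rank $4$ by Figure \ref{fig:rank 4 graphs}); both are excluded, so every vertex is adjacent to exactly two of $\{a,b,c\}$. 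Solving $R_w=\alpha R_a+\beta R_b+\gamma R_c$ at columns $a,b,c$ for each of the three admissible adjacency patterns shows that $R_w$ equals one of $R_a,R_b,R_c$, which sorts $V(\Gamma)$ into three neighborhood classes $A,B,C$ with $\text{nbd}(a)=B\cup C$ and the analogous identities; this is exactly the complete tripartite graph $K_{|A|,|B|,|C|}\in\mathcal{M}(K_3)$.

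The main obstacle is the rank-$3$ analysis: ruling out the ``adjacent to exactly one'' pattern (equivalently, forbidding the induced paw) is the step that genuinely uses the sharp rank bound rather than mere triangle or $K_4$ considerations, and it is what collapses the a priori many possible rows into just three neighborhood types. Once every vertex is pinned to one of the patterns $(0,1,1),(1,0,1),(1,1,0)$, the identification with a complete multipartite graph is routine bookkeeping.
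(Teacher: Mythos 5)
The paper does not actually prove this theorem: it is imported from \cite{Nulity-of-graphs-ELA,Nulity-bicyclic-graph,Rank-of-graphs-Sc} and used as a black box, so your proposal has to be judged on its own merits rather than against an in-paper argument. Your overall strategy is the standard one for these characterizations (rank monotonicity under induced subgraphs, a basis of rows coming from an edge respectively a triangle, then solving for the coefficients of every other row against that basis), the ``if'' directions via Proposition \ref{Prop: rank of graph and its multipl aresame} are correct, and the rank-$2$ argument is complete as written.

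There is, however, one genuine (if small) gap in the rank-$3$ classification. From the exclusion of ``adjacent to all three of $a,b,c$'' (induced $K_4$) and ``adjacent to exactly one'' (induced paw $B_1$ of Figure \ref{fig:rank 4 graphs}) you conclude that every vertex is adjacent to exactly two of $a,b,c$; this inference skips the fourth pattern, a vertex $w$ adjacent to \emph{none} of $a,b,c$, which cannot be excluded by looking at the induced subgraph on $\{w,a,b,c\}$ alone (that subgraph is a triangle plus an isolated vertex, of rank $3$). The fix is the same device you already invoke in the rank-$2$ case: solving $R_w=\alpha R_a+\beta R_b+\gamma R_c$ at columns $a,b,c$ with the pattern $(0,0,0)$ gives $\beta+\gamma=\alpha+\gamma=\alpha+\beta=0$, hence $\alpha=\beta=\gamma=0$ and $R_w=0$, impossible in a connected graph on at least two vertices. (Alternatively, purely by induced subgraphs: if some vertex misses all of $a,b,c$, take $w$ at distance exactly $2$ from the triangle; its neighbour $x$ on a shortest path is adjacent to exactly two of $a,b,c$, say $b,c$, and then $\{w,x,b,c\}$ induces a paw, again of rank $4$.) With that one case added, your sorting of the vertices into the three row-types $R_a,R_b,R_c$ and the identification of $\Gamma$ as complete tripartite go through, and the rest of the argument is sound.
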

As an immediate consequence of Theorem \ref{thm: cha of graphs rank leq 3} and Remark \ref{rem: diameter of graph after multi of comple graph}, we have the following:
\begin{lemma}\label{lem: diam of rank leq 3 graphs}
	Let $\Gamma$ be a simple connected graph such that $r(\Gamma)\leq 3.$ Then $\text{diam}(\Gamma)\leq 2$ if and only if $\Gamma\in \mathcal{M}(K_2, K_3).$ 
\end{lemma} 	
	Now, we present the main theorem in \cite{Rank-4-graph-LAA} that identified all graphs having rank $4.$ 
	\begin{theorem}[\cite{Rank-4-graph-LAA}]\label{thm: cha of graphs of rank 4}
		Let $\Gamma$ be a simple connected graph. Then $r(\Gamma)=4$ if and only if $\Gamma\in \mathcal{M}(B_1, B_2, B_3, B_4, B_5, K_4, P_4, P_5),$ where the graphs $B_1, B_2, B_3, B_4, B_5, K_4, P_4, P_5$ are depicted in Figure \ref{fig:rank 4 graphs}.
	\end{theorem}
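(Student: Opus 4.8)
The plan is to reduce the statement to a finite classification of \emph{reduced} graphs and then settle that classification by a bounded vertex-extension argument. First I would observe that every connected graph $\Gamma$ is obtained as $\hat\Gamma\bigodot m$ from a unique reduced graph $\hat\Gamma$ (merge each maximal class of vertices sharing a common neighbourhood into a single vertex), and that each of the eight graphs $B_1,\dots,B_5,K_4,P_4,P_5$ in Figure \ref{fig:rank 4 graphs} is itself reduced. Since rank is invariant under multiplication of vertices by Proposition \ref{Prop: rank of graph and its multipl aresame}, we have $r(\Gamma)=r(\hat\Gamma)$; hence the theorem is \emph{equivalent} to the assertion that the reduced connected graphs of rank exactly $4$ are precisely these eight. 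This reformulation is what I would work with throughout.

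For the forward (easy) direction I would verify $r=4$ for the eight seeds directly: $P_4$ and $P_5$ follow from Lemma \ref{Lemm: rank of path and cycle} ($n=4$ even gives $4$, and $n=5$ odd gives $5-1=4$), $K_4$ is nonsingular, and the remaining five graphs are small enough (four to six vertices) that each rank is a one-line determinant computation.

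The substance is the converse: every reduced connected graph $\Gamma$ of rank $4$ is one of the eight. Here I would use the classical fact that for a real symmetric matrix the rank equals the largest order of a nonsingular principal submatrix; thus $\Gamma$ contains an induced $4$-vertex subgraph $H$ whose adjacency matrix is nonsingular, and $H$ must be one of the finitely many $4$-vertex graphs of rank $4$ (namely $P_4$, the paw, $K_4$, and $2K_2$, each of which is automatically reduced since nonsingularity forbids repeated rows). Fixing such an $H$, the row space $R(A(\Gamma))$ is the fixed $4$-dimensional space spanned by the rows of $H$, so every remaining vertex contributes a $(0,1)$-row lying in this space, and reducedness forbids two rows from coinciding. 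I would then build $\Gamma$ from $H$ one vertex at a time, exploiting that (i) each new row is a $0/1$ vector in the existing span, (ii) its column is forced by symmetry, and (iii) it must differ from all earlier rows; since a fixed $4$-dimensional real space contains only finitely many $0/1$ vectors, these constraints bound $|V(\Gamma)|$ (this is the $\le 2^{r}-1$ / Kotlov--Lov\'asz-type bound recalled in the introduction, which for $r=4$ caps the order at $6$), reducing the problem to a finite explicit search.

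The main obstacle I expect is precisely this converse enumeration: organising the case analysis so that it is genuinely exhaustive --- ruling out every reduced rank-$4$ graph outside the list while confirming the list is closed under the admissible vertex additions --- without an unmanageable proliferation of cases. The twin-freeness and connectivity hypotheses are what keep the count finite and small, and the lower-rank structures of Theorem \ref{thm: cha of graphs rank leq 3} (the classes $\mathcal{M}(K_2)$ and $\mathcal{M}(K_3)$) supply the base configurations to extend; but carefully bookkeeping the admissible neighbourhood of each appended vertex, and showing connectivity forces the two components of an initial $2K_2$ to knit together into one of the listed graphs, is the delicate part of the argument.
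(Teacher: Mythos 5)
The statement you are proving is not proved in the paper at all: it is imported verbatim from the literature (the citation \cite{Rank-4-graph-LAA}), so there is no internal proof to compare against; your attempt has to stand on its own as a proof of the rank-$4$ classification.

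As a strategy, your reduction is sound: passing to the reduced graph $\hat\Gamma$ (which is legitimate, since identical-neighbourhood classes are independent sets, all eight seed graphs are themselves reduced, and Proposition \ref{Prop: rank of graph and its multipl aresame} preserves rank), invoking the fact that a symmetric matrix of rank $4$ has a nonsingular $4\times 4$ principal submatrix, identifying the four nonsingular $4$-vertex graphs ($P_4$, the paw, $K_4$, $2K_2$), and noting that every further vertex has its entire row determined by its adjacencies into that $4$-set, which together with reducedness makes the extension problem finite. However, there is a genuine gap, and it is exactly where you place it: the converse enumeration, which \emph{is} the theorem, is announced but never carried out. You never exhibit which neighbourhoods into $H$ produce $(0,1)$-rows consistent with symmetry, never show the resulting graphs close up into the eight listed ones, and never rule out anything outside the list; so the proposal is a plan for a proof rather than a proof. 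Moreover, the one quantitative ingredient you lean on is misquoted: the introduction recalls only the Kotlov--Lov\'asz bound $c\cdot 2^{r/2}$ with an unspecified constant and the (conjectural, in general) equality $m(r)=n(r)$, so nothing ``recalled in the introduction'' caps the order at $6$ for $r=4$. What your own argument actually yields is the cruder bound $2^4-1=15$ vertices (distinct nonzero $(0,1)$-rows in a $4$-dimensional row space), which still makes the search finite but substantially larger than you claim; obtaining $6$ as the true maximum is a consequence of the completed classification, not an input to it. To close the gap you would need to execute the case analysis over the four seeds $P_4$, paw, $K_4$, $2K_2$ and all admissible vertex additions, which is precisely the content of the cited paper.
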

Here we classify all graphs $\Gamma$ such that $r(\Gamma)=4$ and $\text{diam}(\Gamma)\leq 3.$
 \begin{lemma}\label{lem: char of graph of rank 4 with diam 2 and 3}
Let $\Gamma$ be a simple connected graph with $r(\Gamma)=4.$ Then $\text{diam}(\Gamma)\leq 3$ if and only if $\Gamma\in \mathcal{M}(B_1, B_2, B_3, B_4, B_5, P_4, K_4).$
	\end{lemma} 
\begin{proof}
Look at the graphs in Figure \ref{fig:rank 4 graphs}. Clearly, the diameter of $P_5$ is $4$ but the diameter of all other non-complete graphs is either $2$ or $3.$ So,  applying Lemma \ref{lem: diam fixed by multiplication vertices}, Remark \ref{rem: diameter of graph after multi of comple graph} and Theorem \ref{thm: cha of graphs of rank 4}, we get this lemma.
\end{proof}
	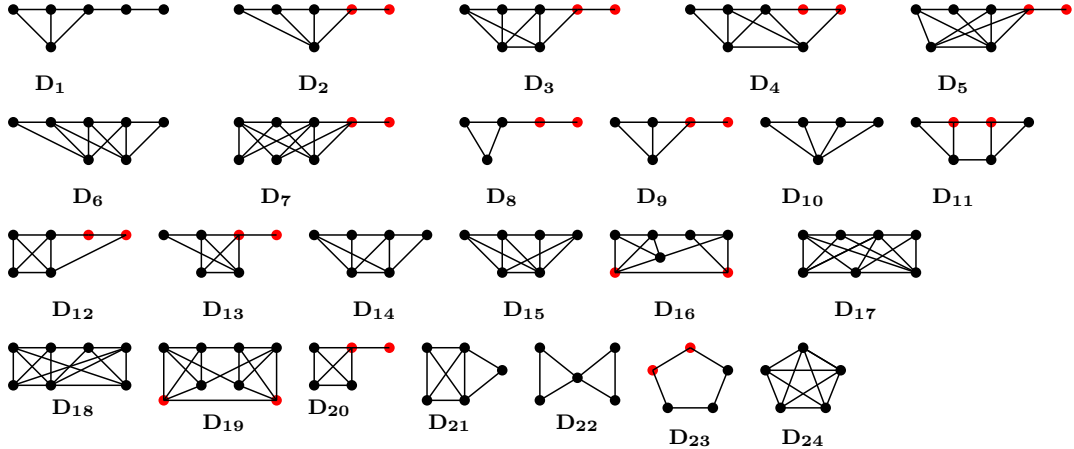
\begin{figure}
		\tiny
		\tikzstyle{ver}=[]
		\tikzstyle{vert}=[circle, draw, fill=black!100, inner sep=0pt, minimum width=4pt]
		\tikzstyle{vertex}=[circle, draw, fill=black!00, inner sep=0pt, minimum width=4pt]
		\tikzstyle{edge} = [draw,thick,-]
		\tikzstyle{node_style} = [circle,draw=blue,fill=blue!20!,font=\sffamily\Large\bfseries]
		\centering
		\begin{tikzpicture}[scale=1]
		\tikzstyle{edge_style} = [draw=black, line width=2mm, ]
		\tikzstyle{node_style} = [draw=blue,fill=blue!00!,font=\sffamily\Large\bfseries]
		\fill[black!100!](0,0) circle (.07);
		\fill[black!100!](.5,0) circle (.07);
		\fill[black!100!](1,0) circle (.07);
		\fill[black!100!](1.5,0) circle (.07);
		\fill[black!100!](2,0) circle (.07);
		\fill[black!100!](.5,-.5) circle (.07);
		\draw[line width=.2 mm] (0,0) -- (.5,0);
		\draw[line width=.2 mm] (.5,0) -- (1.5,0);
		\draw[line width=.2 mm] (2,0) -- (1.5,0);
		\draw[line width=.2 mm] (.5,-.5) -- (0,0);
		\draw[line width=.2 mm] (.5,-.5) -- (.5,0);
		\draw[line width=.2 mm] (.5,-.5) -- (1,0);
		\node (A4) at (.5,-1)  {$\bf{D_1}$};
		\fill[black!100!](3,0) circle (.07);
		\fill[black!100!](3.5,0) circle (.07);
		\fill[black!100!](4,0) circle (.07);
		\fill[red!100!](4.5,0) circle (.07);
		\fill[red!100!](5,0) circle (.07);
		\fill[black!100!](4,-.5) circle (.07);
		\draw[line width=.2 mm] (3,0) -- (3.5,0);
		\draw[line width=.2 mm] (3.5,0) -- (4,0);
		\draw[line width=.2 mm] (4,0) -- (4.5,0);
		\draw[line width=.2 mm] (4.5,0) -- (5,0);
		\draw[line width=.2 mm] (4,-.5) -- (3,0);
		\draw[line width=.2 mm] (4,-.5) -- (3.5,0);
		\draw[line width=.2 mm] (4,-.5) -- (4,0);
		\draw[line width=.2 mm] (4,-.5) -- (4.5,0);
		\node (A4) at (4,-1)  {$\bf{D_2}$};
		\fill[black!100!](6,0) circle (.07);
		\fill[black!100!](6.5,0) circle (.07);
		\fill[black!100!](7,0) circle (.07);
		\fill[red!100!](7.5,0) circle (.07);
		\fill[red!100!](8,0) circle (.07);
		\fill[black!100!](6.5,-.5) circle (.07);
		\fill[black!100!](7,-.5) circle (.07);
		\draw[line width=.2 mm] (6.5,-.5) -- (7,-.5);
		\draw[line width=.2 mm] (6,0) -- (6.5,0);
		\draw[line width=.2 mm] (6.5,0) -- (7,0);
		\draw[line width=.2 mm] (7,0) -- (7.5,0);
		\draw[line width=.2 mm] (7.5,0) -- (8,0);
		\draw[line width=.2 mm] (6.5,-.5) -- (6,0);
		\draw[line width=.2 mm] (6.5,-.5) -- (6.5,0);
		\draw[line width=.2 mm] (6.5,-.5) -- (7,0);
		\draw[line width=.2 mm] (7,-.5) -- (6,0);
		\draw[line width=.2 mm] (7,-.5) -- (7,0);
		\draw[line width=.2 mm] (7,-.5) -- (7.5,0);
		\node (A4) at (7,-1)  {$\bf{D_3}$};
		\fill[black!100!](9,0) circle (.07);
		\fill[black!100!](9.5,0) circle (.07);
		\fill[black!100!](10,0) circle (.07);
		\fill[red!100!](10.5,0) circle (.07);
		\fill[red!100!](11,0) circle (.07);
		\fill[black!100!](9.5,-.5) circle (.07);
		\fill[black!100!](10.5,-.5) circle (.07);
		\draw[line width=.2 mm] (9.5,-.5) -- (10.5,-.5);
		\draw[line width=.2 mm] (9,0) -- (9.5,0);
		\draw[line width=.2 mm] (9.5,0) -- (10,0);
		\draw[line width=.2 mm] (10,0) -- (10.5,0);
		\draw[line width=.2 mm] (10.5,0) -- (11,0);
		\draw[line width=.2 mm] (9.5,-.5) -- (9,0);
		\draw[line width=.2 mm] (9.5,-.5) -- (9.5,0);
		\draw[line width=.2 mm] (9.5,-.5) -- (10,0);
		\draw[line width=.2 mm] (10.5,-.5) -- (9.5,0);
		\draw[line width=.2 mm] (10.5,-.5) -- (10,0);
		\draw[line width=.2 mm] (10.5,-.5) -- (11,0);
		\node (A4) at (10,-1)  {$\bf{D_4}$};
		\fill[black!100!](12,0) circle (.07);
		\fill[black!100!](12.5,0) circle (.07);
		\fill[black!100!](13,0) circle (.07);
		\fill[red!100!](13.5,0) circle (.07);
		\fill[red!100!](14,0) circle (.07);
		\fill[black!100!](12.2,-.5) circle (.07);
		\fill[black!100!](13,-.5) circle (.07);
		\draw[line width=.2 mm] (12.2,-.5) -- (13,-.5);
		\draw[line width=.2 mm] (12,0) -- (12.5,0);
		\draw[line width=.2 mm] (12.5,0) -- (13,0);
		\draw[line width=.2 mm] (13,0) -- (13.5,0);
		\draw[line width=.2 mm] (13.5,0) -- (14,0);
		\draw[line width=.2 mm] (12.2,-.5) -- (12,0);
		\draw[line width=.2 mm] (12.2,-.5) -- (13,0);
		\draw[line width=.2 mm] (12.2,-.5) -- (13.5,0);
		\draw[line width=.2 mm] (13,-.5) -- (12,0);
		\draw[line width=.2 mm] (13,-.5) -- (12.5,0);
		\draw[line width=.2 mm] (13,-.5) -- (13,0);
		\draw[line width=.2 mm] (13,-.5) -- (13.5,0);
		\node (A4) at (12.5,-1)  {$\bf{D_5}$};
		\fill[black!100!](0,-1.5) circle (.07);
		\fill[black!100!](.5,-1.5) circle (.07);
		\fill[black!100!](1,-1.5) circle (.07);
		\fill[black!100!](1.5,-1.5) circle (.07);
		\fill[black!100!](2,-1.5) circle (.07);
		\fill[black!100!](1,-2) circle (.07);
		\fill[black!100!](1.5,-2) circle (.07);
		\draw[line width=.2 mm] (0,-1.5) -- (.5,-1.5);
		\draw[line width=.2 mm] (.5,-1.5) -- (1.5,-1.5);
		\draw[line width=.2 mm] (2,-1.5) -- (1.5,-1.5);
		\draw[line width=.2 mm] (1,-2) -- (0,-1.5);
		\draw[line width=.2 mm] (1,-2) -- (.5,-1.5);
		\draw[line width=.2 mm] (1,-2) -- (1,-1.5);
		\draw[line width=.2 mm] (1,-2) -- (1.5,-1.5);
		\draw[line width=.2 mm] (1.5,-2) -- (.5,-1.5);
		\draw[line width=.2 mm] (1.5,-2) -- (1,-1.5);
		\draw[line width=.2 mm] (1.5,-2) -- (1.5,-1.5);
		\draw[line width=.2 mm] (1.5,-2) -- (2,-1.5);
		\node (A4) at (1,-2.5)  {$\bf{D_6}$};
		\fill[black!100!](3,-1.5) circle (.07);
		\fill[black!100!](3.5,-1.5) circle (.07);
		\fill[black!100!](4,-1.5) circle (.07);
		\fill[red!100!](4.5,-1.5) circle (.07);
		\fill[red!100!](5,-1.5) circle (.07);
		\fill[black!100!](3,-2) circle (.07);
		\fill[black!100!](3.5,-2) circle (.07);
		\fill[black!100!](4,-2) circle (.07);
		\draw[line width=.2 mm] (3,-1.5) -- (3.5,-1.5);
		\draw[line width=.2 mm] (3.5,-1.5) -- (4,-1.5);
		\draw[line width=.2 mm] (4,-1.5) -- (4.5,-1.5);
		\draw[line width=.2 mm] (4.5,-1.5) -- (5,-1.5);
		\draw[line width=.2 mm] (3,-2) -- (3,-1.5);
		\draw[line width=.2 mm] (3,-2) -- (3.5,-1.5);
		\draw[line width=.2 mm] (3,-2) -- (4,-1.5);
		\draw[line width=.2 mm] (3.5,-2) -- (3,-1.5);
		\draw[line width=.2 mm] (3.5,-2) -- (4,-1.5);
		\draw[line width=.2 mm] (3.5,-2) --(4.5,-1.5);
		\draw[line width=.2 mm] (4,-2) -- (3,-1.5);
		\draw[line width=.2 mm] (4,-2) -- (3.5,-1.5);
		\draw[line width=.2 mm] (4,-2) -- (4,-1.5);
		\draw[line width=.2 mm] (4,-2) -- (4.5,-1.5);
		\node (A4) at (3.5,-2.5)  {$\bf{D_7}$};
		\fill[black!100!](6,-1.5) circle (.07);
		\fill[black!100!](6.5,-1.5) circle (.07);
		\fill[red!100!](7,-1.5) circle (.07);
		\fill[red!100!](7.5,-1.5) circle (.07);
		\fill[black!100!](6.3,-2) circle (.07);
		%
		%
		\draw[line width=.2 mm] (6,-1.5) -- (6.5,-1.5);
		\draw[line width=.2 mm] (6.5,-1.5) -- (7,-1.5);
		\draw[line width=.2 mm] (7,-1.5) -- (7.5,-1.5);
		\draw[line width=.2 mm] (6.3,-2) -- (6,-1.5);
		\draw[line width=.2 mm] (6.3,-2) -- (6.5,-1.5);
		\node (A4) at (6.5,-2.5)  {$\bf{D_8}$};
		\fill[black!100!](8,-1.5) circle (.07);
		\fill[black!100!](8.5,-1.5) circle (.07);
		\fill[red!100!](9,-1.5) circle (.07);
		\fill[red!100!](9.5,-1.5) circle (.07);
		\fill[black!100!](8.5,-2) circle (.07);
		%
		%
		\draw[line width=.2 mm] (8,-1.5) -- (8.5,-1.5);
		\draw[line width=.2 mm] (8.5,-1.5) -- (9,-1.5);
		\draw[line width=.2 mm] (9,-1.5) -- (9.5,-1.5);
		\draw[line width=.2 mm] (8.5,-2) -- (8,-1.5);
		\draw[line width=.2 mm] (8.5,-2) -- (8.5,-1.5);
		\draw[line width=.2 mm] (8.5,-2) -- (9,-1.5);
		\node (A4) at (8.5,-2.5)  {$\bf{D_9}$};
		\fill[black!100!](10,-1.5) circle (.07);
		\fill[black!100!](10.5,-1.5) circle (.07);
		\fill[black!100!](11,-1.5) circle (.07);
		\fill[black!100!](11.5,-1.5) circle (.07);
		\fill[black!100!](10.7,-2) circle (.07);
		%
		%
		\draw[line width=.2 mm] (10,-1.5) -- (10.5,-1.5);
		\draw[line width=.2 mm] (10.5,-1.5) -- (11,-1.5);
		\draw[line width=.2 mm] (11,-1.5) -- (11.5,-1.5);
		\draw[line width=.2 mm] (10.7,-2) -- (10,-1.5);
		\draw[line width=.2 mm] (10.7,-2) -- (10.5,-1.5);
		\draw[line width=.2 mm] (10.7,-2) -- (11,-1.5);
		\draw[line width=.2 mm] (10.7,-2) -- (11.5,-1.5);
		\node (A4) at (10.5,-2.5)  {$\bf{D_{10}}$};
		\fill[black!100!](12,-1.5) circle (.07);
		\fill[red!100!](12.5,-1.5) circle (.07);
		\fill[red!100!](13,-1.5) circle (.07);
		\fill[black!100!](13.5,-1.5) circle (.07);
		\fill[black!100!](12.5,-2) circle (.07);
		\fill[black!100!](13,-2) circle (.07);
		\draw[line width=.2 mm] (12.5,-2) -- (13,-2);
		\draw[line width=.2 mm] (12,-1.5) -- (12.5,-1.5);
		\draw[line width=.2 mm] (12.5,-1.5) -- (13,-1.5);
		\draw[line width=.2 mm] (13,-1.5) -- (13.5,-1.5);
		\draw[line width=.2 mm] (12.5,-2) -- (12,-1.5);
		\draw[line width=.2 mm] (12.5,-2) -- (12.5,-1.5);
		\draw[line width=.2 mm] (13,-2) -- (13,-1.5);
		\draw[line width=.2 mm] (13,-2) -- (13.5,-1.5);
		\node (A4) at (12.5,-2.5)  {$\bf{D_{11}}$};
		\fill[black!100!](0,-3) circle (.07);
		\fill[black!100!](.5,-3) circle (.07);
		\fill[red!100!](1,-3) circle (.07);
		\fill[red!100!](1.5,-3) circle (.07);
		\fill[black!100!](0,-3.5) circle (.07);
		\fill[black!100!](.5,-3.5) circle (.07);
		\draw[line width=.2 mm] (0,-3) -- (.5,-3);
		\draw[line width=.2 mm] (.5,-3) -- (1,-3);
		\draw[line width=.2 mm] (1,-3) -- (1.5,-3);
		\draw[line width=.2 mm] (0,-3.5) -- (0,-3);
		\draw[line width=.2 mm] (0,-3.5) -- (.5,-3);
		\draw[line width=.2 mm] (0,-3.5) -- (.5,-3.5);
		\draw[line width=.2 mm] (.5,-3.5) -- (1.5,-3);
		\draw[line width=.2 mm] (.5,-3.5) -- (.5,-3);
		\draw[line width=.2 mm] (.5,-3.5) -- (0,-3);
		\node (A4) at (.8,-4)  {$\bf{D_{12}}$};
		\fill[black!100!](2,-3) circle (.07);
		\fill[black!100!](2.5,-3) circle (.07);
		\fill[red!100!](3,-3) circle (.07);
		\fill[red!100!](3.5,-3) circle (.07);
		\fill[black!100!](2.5,-3.5) circle (.07);
		\fill[black!100!](3,-3.5) circle (.07);
		\draw[line width=.2 mm] (2,-3) -- (2.5,-3);
		\draw[line width=.2 mm] (2.5,-3) -- (3,-3);
		\draw[line width=.2 mm] (3,-3) -- (3.5,-3);
		\draw[line width=.2 mm] (2.5,-3.5) -- (2.5,-3);
		\draw[line width=.2 mm] (2.5,-3.5) -- (3,-3);
		\draw[line width=.2 mm] (2.5,-3.5) -- (3,-3.5);
		\draw[line width=.2 mm] (3,-3.5) -- (3,-3);
		\draw[line width=.2 mm] (3,-3.5) -- (2.5,-3);
		\draw[line width=.2 mm] (3,-3.5) -- (2,-3);
		\node (A4) at (2.8,-4)  {$\bf{D_{13}}$};
		\fill[black!100!](4,-3) circle (.07);
		\fill[black!100!](4.5,-3) circle (.07);
		\fill[black!100!](5,-3) circle (.07);
		\fill[black!100!](5.5,-3) circle (.07);
		\fill[black!100!](4.5,-3.5) circle (.07);
		\fill[black!100!](5,-3.5) circle (.07);
		\draw[line width=.2 mm] (4,-3) -- (4.5,-3);
		\draw[line width=.2 mm] (4.5,-3) -- (5,-3);
		\draw[line width=.2 mm] (5,-3) -- (5.5,-3);
		\draw[line width=.2 mm] (4.5,-3.5) -- (4,-3);
		\draw[line width=.2 mm] (4.5,-3.5) -- (4.5,-3);
		\draw[line width=.2 mm] (4.5,-3.5) -- (5,-3);
		\draw[line width=.2 mm] (4.5,-3.5) -- (5,-3.5);
		\draw[line width=.2 mm] (5,-3.5) -- (5,-3);
		\draw[line width=.2 mm] (5,-3.5) -- (4,-3);
		\draw[line width=.2 mm] (5,-3.5) -- (5.5,-3);
		\node (A4) at (4.8,-4)  {$\bf{D_{14}}$};
		\fill[black!100!](6,-3) circle (.07);
		\fill[black!100!](6.5,-3) circle (.07);
		\fill[black!100!](7,-3) circle (.07);
		\fill[black!100!](7.5,-3) circle (.07);
		\fill[black!100!](6.5,-3.5) circle (.07);
		\fill[black!100!](7,-3.5) circle (.07);
		\draw[line width=.2 mm] (6,-3) -- (6.5,-3);
		\draw[line width=.2 mm] (6.5,-3) -- (7,-3);
		\draw[line width=.2 mm] (7.5,-3) -- (7,-3);
		\draw[line width=.2 mm] (6.5,-3.5) -- (6,-3);
		\draw[line width=.2 mm] (6.5,-3.5) -- (6.5,-3);
		\draw[line width=.2 mm] (6.5,-3.5) -- (7.5,-3);
		\draw[line width=.2 mm] (6.5,-3.5) -- (7,-3.5);
		\draw[line width=.2 mm] (7,-3.5) -- (6,-3);
		\draw[line width=.2 mm] (7,-3.5) -- (6.5,-3);
		\draw[line width=.2 mm] (7,-3.5) -- (7,-3);
		\draw[line width=.2 mm] (7,-3.5) -- (7.5,-3);
		\node (A4) at (6.8,-4)  {$\bf{D_{15}}$};
		\fill[black!100!](8,-3) circle (.07);
		\fill[black!100!](8.5,-3) circle (.07);
		\fill[black!100!](9,-3) circle (.07);
		\fill[black!100!](9.5,-3) circle (.07);
		\fill[red!100!](8,-3.5) circle (.07);
		\fill[red!100!](9.5,-3.5) circle (.07);
		\fill[black!100!](8.6,-3.3) circle (.07);
		\draw[line width=.2 mm] (8,-3) -- (8.5,-3);
		\draw[line width=.2 mm] (8.5,-3) -- (9,-3);
		\draw[line width=.2 mm] (9.5,-3) -- (9,-3);
		\draw[line width=.2 mm] (8,-3.5) -- (9.5,-3.5);
		\draw[line width=.2 mm] (8,-3.5) -- (8,-3);
		\draw[line width=.2 mm] (8,-3.5) -- (8.5,-3);
		\draw[line width=.2 mm] (8,-3.5) -- (8.6,-3.3);
		\draw[line width=.2 mm] (9.5,-3.5) -- (9.5,-3);
		\draw[line width=.2 mm] (9.5,-3.5) -- (9,-3);
		\draw[line width=.2 mm] (8.6,-3.3) -- (9.5,-3);
		\draw[line width=.2 mm] (8.6,-3.3) -- (8.5,-3);
		\draw[line width=.2 mm] (8.6,-3.3) -- (8,-3);
		\node (A4) at (8.8,-4)  {$\bf{D_{16}}$};
		\fill[black!100!](10.5,-3) circle (.07);
		\fill[black!100!](11,-3) circle (.07);
		\fill[black!100!](11.5,-3) circle (.07);
		\fill[black!100!](12,-3) circle (.07);
		\fill[black!100!](10.5,-3.5) circle (.07);
		\fill[black!100!](11.2,-3.5) circle (.07);
		\fill[black!100!](12,-3.5) circle (.07);
		\draw[line width=.2 mm] (10.5,-3) -- (11,-3);
		\draw[line width=.2 mm] (11,-3) -- (11.5,-3);
		\draw[line width=.2 mm] (12,-3) -- (11.5,-3);
		\draw[line width=.2 mm] (11,-3) -- (10.5,-3.5);
		\draw[line width=.2 mm] (10.5,-3.5) -- (11.2,-3.5);
		\draw[line width=.2 mm] (10.5,-3.5) -- (10.5,-3);
		\draw[line width=.2 mm] (10.5,-3.5) -- (11.5,-3);
		\draw[line width=.2 mm] (10.5,-3.5) -- (11.5,-3);
		\draw[line width=.2 mm] (11.2,-3.5) -- (12,-3.5);
		\draw[line width=.2 mm] (11.2,-3.5) -- (10.5,-3);
		\draw[line width=.2 mm] (11.2,-3.5) -- (11.5,-3);
		\draw[line width=.2 mm] (11.2,-3.5) -- (12,-3);
		\draw[line width=.2 mm] (12,-3.5) -- (12,-3);
		\draw[line width=.2 mm] (12,-3.5) -- (11.5,-3);
		\draw[line width=.2 mm] (12,-3.5) -- (11,-3);
		\draw[line width=.2 mm] (12,-3.5) -- (10.5,-3);
		\node (A4) at (11.2,-4)  {$\bf{D_{17}}$};
		\fill[black!100!](0,-4.5) circle (.07);
		\fill[black!100!](.5,-4.5) circle (.07);
		\fill[black!100!](1,-4.5) circle (.07);
		\fill[black!100!](1.5,-4.5) circle (.07);
		\fill[black!100!](0,-5) circle (.07);
		\fill[black!100!](.5,-5) circle (.07);
		\fill[black!100!](1.5,-5) circle (.07);
		\draw[line width=.2 mm] (0,-4.5) -- (.5,-4.5);
		\draw[line width=.2 mm] (.5,-4.5) -- (1,-4.5);
		\draw[line width=.2 mm] (1,-4.5) -- (1.5,-4.5);
		\draw[line width=.2 mm] (0,-5) -- (0,-4.5);
		\draw[line width=.2 mm] (0,-5) -- (.5,-4.5);
		\draw[line width=.2 mm] (0,-5) -- (1.5,-4.5);
		\draw[line width=.2 mm] (0,-5) -- (.5,-5);
		\draw[line width=.2 mm] (.5,-5) -- (0,-4.5);
		\draw[line width=.2 mm] (.5,-5) -- (.5,-4.5);
		\draw[line width=.2 mm] (.5,-5) -- (1,-4.5);
		\draw[line width=.2 mm] (.5,-5) -- (1.5,-4.5);
		\draw[line width=.2 mm] (.5,-5) -- (1.5,-5);
		\draw[line width=.2 mm] (1.5,-5) -- (0,-4.5);
		\draw[line width=.2 mm] (1.5,-5) -- (1,-4.5);
		\draw[line width=.2 mm] (1.5,-5) -- (1.5,-4.5);
		\node (A4) at (.8,-5.3)  {$\bf{D_{18}}$};
		\fill[black!100!](2,-4.5) circle (.07);
		\fill[black!100!](2.5,-4.5) circle (.07);
		\fill[black!100!](3,-4.5) circle (.07);
		\fill[black!100!](3.5,-4.5) circle (.07);
		\fill[red!100!](2,-5.2) circle (.07);
		\fill[red!100!](3.5,-5.2) circle (.07);
		\fill[black!100!](2.5,-5) circle (.07);
		\fill[black!100!](3,-5) circle (.07);
		\draw[line width=.2 mm] (2,-4.5) -- (2.5,-4.5);
		\draw[line width=.2 mm] (2.5,-4.5) -- (3,-4.5);
		\draw[line width=.2 mm] (3,-4.5) -- (3.5,-4.5);
		\draw[line width=.2 mm] (2,-5.2) -- (2,-4.5);
		\draw[line width=.2 mm] (2,-5.2) -- (3.5,-5.2);
		\draw[line width=.2 mm] (2,-5.2) -- (2.5,-4.5);
		\draw[line width=.2 mm] (2,-5.2) -- (2.5,-5);
		\draw[line width=.2 mm] (3.5,-5.2) -- (3.5,-4.5);
		\draw[line width=.2 mm] (2.5,-5) -- (3.5,-4.5);
		\draw[line width=.2 mm] (3.5,-5.2) -- (3,-4.5);
		\draw[line width=.2 mm] (3.5,-5.2) -- (3,-5);
		\draw[line width=.2 mm] (2.5,-5) -- (2.5,-4.5);
		\draw[line width=.2 mm] (2.5,-5) -- (2,-4.5);
		\draw[line width=.2 mm] (3,-5) -- (3,-4.5);
		\draw[line width=.2 mm] (3,-5) -- (2,-4.5);
		\draw[line width=.2 mm] (3,-5) -- (3.5,-4.5);
		\node (A4) at (2.8,-5.5)  {$\bf{D_{19}}$};
		\fill[black!100!](4,-4.5) circle (.07);
		\fill[red!100!](4.5,-4.5) circle (.07);
		\fill[red!100!](5,-4.5) circle (.07);
		\fill[black!100!](4,-5) circle (.07);
		\fill[black!100!](4.5,-5) circle (.07);
		\draw[line width=.2 mm] (4,-4.5) -- (4.5,-4.5);
		\draw[line width=.2 mm] (4.5,-4.5) -- (5,-4.5);
		\draw[line width=.2 mm] (4,-5) -- (4,-4.5);
		\draw[line width=.2 mm] (4,-5) -- (4.5,-5);
		\draw[line width=.2 mm] (4,-5) -- (4.5,-4.5);
		\draw[line width=.2 mm] (4.5,-5) -- (4.5,-4.5);
		\draw[line width=.2 mm] (4.5,-5) -- (4,-4.5);
		\node (A4) at (4.2,-5.3)  {$\bf{D_{20}}$};
		\fill[black!100!](5.5,-4.5) circle (.07);
		\fill[black!100!](6,-4.5) circle (.07);
		\fill[black!100!](6.5,-4.8) circle (.07);
		\fill[black!100!](5.5,-5.2) circle (.07);
		\fill[black!100!](6,-5.2) circle (.07);
		\draw[line width=.2 mm] (5.5,-4.5) -- (6,-4.5);
		\draw[line width=.2 mm] (5.5,-4.5) -- (5.5,-5.2);
		\draw[line width=.2 mm] (5.5,-4.5) -- (6,-5.2);
		\draw[line width=.2 mm] (6,-4.5) -- (6,-5.2);
		\draw[line width=.2 mm] (6,-4.5) -- (5.5,-5.2);
		\draw[line width=.2 mm] (6.5,-4.8) -- (6,-4.5);
		\draw[line width=.2 mm] (6.5,-4.8) -- (6,-5.2);
		\draw[line width=.2 mm] (5.5,-5.2) -- (6,-5.2);
		\node (A4) at (5.8,-5.5)  {$\bf{D_{21}}$};
		\fill[black!100!](7,-4.5) circle (.07);
		\fill[black!100!](8,-4.5) circle (.07);
		\fill[black!100!](7,-5.2) circle (.07);
		\fill[black!100!](8,-5.2) circle (.07);
		\fill[black!100!](7.5,-4.9) circle (.07);
		\draw[line width=.2 mm] (7,-4.5) -- (7.5,-4.9);
		\draw[line width=.2 mm] (7,-4.5) -- (7,-5.2);
		\draw[line width=.2 mm] (8,-4.5) -- (7.5,-4.9);
		\draw[line width=.2 mm] (8,-4.5) -- (8,-5.2);
		\draw[line width=.2 mm] (7,-5.2) -- (7.5,-4.9);
		\draw[line width=.2 mm] (8,-5.2) -- (7.5,-4.9);
		\node (A4) at (7.5,-5.5)  {$\bf{D_{22}}$};
		\fill[red!100!](9,-4.5) circle (.07);
		\fill[red!100!](8.5,-4.8) circle (.07);
		\fill[black!100!](9.5,-4.8) circle (.07);
		\fill[black!100!](8.7,-5.3) circle (.07);
		\fill[black!100!](9.3,-5.3) circle (.07);
		\draw[line width=.2 mm] (9,-4.5) -- (8.5,-4.8);
		\draw[line width=.2 mm] (9,-4.5) -- (9.5,-4.8);
		\draw[line width=.2 mm] (8.5,-4.8) -- (8.7,-5.3);
		\draw[line width=.2 mm] (8.7,-5.3) -- (9.3,-5.3);
		\draw[line width=.2 mm] (9.3,-5.3) -- (9.5,-4.8);
		\node (A4) at (9,-5.7)  {$\bf{D_{23}}$};
		\fill[black!100!](10.5,-4.5) circle (.07);
		\fill[black!100!](10,-4.8) circle (.07);
		\fill[black!100!](11,-4.8) circle (.07);
		\fill[black!100!](10.2,-5.3) circle (.07);
		\fill[black!100!](10.8,-5.3) circle (.07);
		\draw[line width=.2 mm] (10.5,-4.5) -- (11,-4.8);
		\draw[line width=.2 mm] (10.5,-4.5) -- (11,-4.8);
		\draw[line width=.2 mm] (10,-4.8) -- (10.2,-5.3);
		\draw[line width=.2 mm] (10.2,-5.3) -- (10.8,-5.3);
		\draw[line width=.2 mm] (10,-4.8) -- (10.5,-4.5);
		\draw[line width=.2 mm] (10.8,-5.3) -- (11,-4.8);
		\draw[line width=.2 mm] (10.5,-4.5) -- (10.8,-5.3);
		\draw[line width=.2 mm] (10.5,-4.5) -- (10.2,-5.3);
		\draw[line width=.2 mm] (10,-4.8) -- (10.8,-5.3);
		\draw[line width=.2 mm] (10,-4.8) -- (11,-4.8);
		\draw[line width=.2 mm] (10.2,-5.3) -- (11,-4.8);
		\node (A4) at (10.5,-5.7)  {$\bf{D_{24}}$};
		\end{tikzpicture}
		\caption{Reduced graphs of rank $5$}
		\label{fig:rank 5 graphs}	
	\end{figure}
	Now, we state the main theorem in \cite{LAA-rank-5-graph-charecterization} that characterized all graphs of rank $5.$
	\begin{theorem}[\cite{LAA-rank-5-graph-charecterization}]\label{thm: cha of graphs of rank 5}
		Let $\Gamma$ be a simple connected graph. Then $r(\Gamma)=5$ if and only if $\Gamma\in\mathcal{M}(D_1, D_2, \cdots, D_{24}),$ where the graphs $D_1, D_2, \cdots, D_{24}$ are described in Figure \ref{fig:rank 5 graphs}.
	\end{theorem}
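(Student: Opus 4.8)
The plan is to reduce the classification to \emph{reduced} graphs and then to a finite, organized enumeration. Recall that every simple connected graph $\Gamma$ is obtained from a unique reduced graph $\Gamma_0$ (one in which no two vertices share the same set of neighbours) by multiplication of vertices: the twin classes of mutually non-adjacent vertices with identical neighbourhoods are exactly the independent sets $V_i$ that get collapsed. By Proposition \ref{Prop: rank of graph and its multipl aresame} we have $r(\Gamma)=r(\Gamma_0)$, so $r(\Gamma)=5$ if and only if $r(\Gamma_0)=5$. Since $\mathcal{M}(D_1,\dots,D_{24})$ is by definition closed under multiplication of vertices, it therefore suffices to show that the reduced connected graphs of rank $5$ are precisely $D_1,\dots,D_{24}$, together with the routine check that each $D_i$ is itself reduced and has rank exactly $5$.

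Next I would establish finiteness of the search. In a reduced graph the rows of $A(\Gamma_0)$ are pairwise distinct, so classifying reduced rank-$5$ graphs is the same as classifying symmetric $(0,1)$-matrices with zero diagonal, pairwise distinct rows, connected support, and rank $5$. The Kotlov--Lov\'{a}sz bound guarantees that such a graph has boundedly many vertices; for this small rank the order is in fact at most $n(5)=8$. This turns the problem into a finite one: among the connected graphs on at most $8$ vertices, isolate those that are reduced and whose adjacency matrix has rank exactly $5$.

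To keep the finite analysis hand-checkable rather than purely machine-driven, I would organize it around the already-established rank-$4$ list. Deleting a vertex from $\Gamma_0$ removes one row and one column of $A(\Gamma_0)$, so the rank drops by at most $2$ and never increases; using the characterization of real symmetric rank via largest nonsingular principal submatrices, one locates a vertex $v$ for which $\Gamma_0-v$ has rank $4$, whence $\Gamma_0-v$ lies in $\mathcal{M}(B_1,\dots,B_5,K_4,P_4,P_5)$ by Theorem \ref{thm: cha of graphs of rank 4}. One then analyses, case by case, the ways a single vertex can be attached to a reduced representative of each rank-$4$ graph so that the result is again reduced of rank $5$, discarding attachments that raise the rank above $5$ or create a pair of false twins. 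Collecting the surviving graphs should yield exactly $D_1,\dots,D_{24}$.

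The main obstacle is precisely this last step: the case analysis is long and must be exhaustive, and two bookkeeping points are delicate. First, one must guarantee that no reduced rank-$5$ graph is overlooked, which is exactly where the a priori order bound is indispensable, since it certifies that the attachment process cannot run off to arbitrarily large graphs. Second, one must eliminate duplicates, because many distinct attachments produce isomorphic graphs and several introduce false twins (hence fail to be reduced). A secondary, routine verification is then required for each $D_i$ separately: exhibit five linearly independent rows of $A(D_i)$ and a linear dependence among the remaining rows to confirm $r(D_i)=5$, and check directly that $D_i$ is reduced, so that every member of $\mathcal{M}(D_1,\dots,D_{24})$ indeed has rank $5$.
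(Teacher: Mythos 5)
First, a point of comparison: the paper does not prove this theorem at all. It is imported verbatim from \cite{LAA-rank-5-graph-charecterization} and used as a black box (only its consequences, via Lemma \ref{lem: ch of graphs of rank five graphs and diam leq 3}, are needed). So there is no internal proof to measure your attempt against; your proposal has to stand on its own as a proof of the classification, and judged that way it is a plausible plan with genuine holes rather than a proof. Your first reduction is sound and is indeed the right frame: every connected graph arises from a unique reduced graph by multiplication of vertices, Proposition \ref{Prop: rank of graph and its multipl aresame} preserves rank, and $\mathcal{M}(D_1,\dots,D_{24})$ is closed under multiplication, so it suffices to show that the reduced connected rank-$5$ graphs are exactly $D_1,\dots,D_{24}$.

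The holes are in the two steps that carry all the weight. (1) Your organizing claim --- that one can always locate a vertex $v$ with $r(\Gamma_0-v)=4$, and that then $\Gamma_0-v\in\mathcal{M}(B_1,\dots,B_5,K_4,P_4,P_5)$ --- is unproven, and its second half is not even well-posed as stated. The principal-submatrix characterization of symmetric rank only yields an induced \emph{nonsingular subgraph on $5$ vertices}; it does not produce a single-vertex deletion that drops the rank by exactly $1$ (deletion of a vertex from a hollow symmetric matrix can drop the rank by $2$, as $K_2$ shows, and a priori every deletion could drop it by $0$ or $2$). Moreover, even when a rank-$4$ deletion exists, $\Gamma_0-v$ need not be connected: deleting the center of the bowtie $D_{22}$ gives $2K_2$, which has rank $4$ but is disconnected, so Theorem \ref{thm: cha of graphs of rank 4} (a statement about \emph{connected} graphs) does not apply to it. Without repairing this (e.g., by also classifying disconnected rank-$4$ graphs and handling cut vertices), your attachment analysis has no exhaustiveness guarantee. (2) The finiteness bound is circular as written: Kotlov--Lov\'{a}sz give $n\le c\cdot 2^{r/2}$ with an unspecified constant $c$, while the equality $m(5)=n(5)=8$ is an \emph{output} of the classification (it is precisely the rank-$5$ instance of the conjecture of \cite{Cameron-akbaria-conjecture} discussed in the introduction), not an a priori input. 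You may invoke the explicit Kotlov--Lov\'{a}sz constant to make the search finite, but you cannot assert the bound $8$ in advance. Finally, the entire content of the theorem --- the case analysis whose outcome is ``exactly $D_1,\dots,D_{24}$, no more and no less,'' together with the verification that each $D_i$ is reduced of rank $5$ --- is deferred with ``should yield.'' A classification theorem is not proved until that enumeration is actually executed and closed off; as submitted, this is an outline of a proof strategy, not a proof.
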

	 \begin{lemma}\label{lem: ch of graphs of rank five graphs and diam leq 3}
		Let $\Gamma$ be a simple connected graph such that $r(\Gamma)=5,$ then 	
		$\text{diam}(\Gamma)\leq 3$ if and only if $\Gamma\in \mathcal{M}(D_{2}, D_{3},  \cdots, D_{24})$ 
	\end{lemma}
\begin{proof}
Notice that the graphs in Figure \ref{fig:rank 5 graphs}, clearly the diameter of the graph $D_1$ is $4.$ But the diameter of all other graphs is $\leq 3.$ Therefore, this lemma follows from Lemma \ref{lem: diam fixed by multiplication vertices}, Remark \ref{rem: diameter of graph after multi of comple graph} and Theorem \ref{thm: cha of graphs of rank 5}.
\end{proof}
\section{Main results}\label{Sec: main results}
Our aim in this paper is to prove Conjecture \ref{conj: Cameron on row space}. To do so first we study the graphs having diameter $\geq 4,$ and then we examine the rest of the graphs. For the graphs having diameter $\geq 4,$ we have a theorem (Theorem \ref{thm: proof of conj diam geq 4}) in support of the conjecture. Furthermore, for diameter $\leq 3$ case, we provide two theorems (Theorems \ref{thm: rank 5 graph follows conj}, \ref{thm: dim 2 graph m=2n-5}) in favour of the conjecture.

Now, we state our main results and proofs of these results are given in Section \ref{Sec: proof of main thm}.
\begin{theorem}\label{thm: proof of conj diam geq 4}
Let $\Gamma$ be any simple connected  graph with $\text{diam}(\Gamma)\geq 4.$ Then there exists a non-zero $(0,1)$-vector in the row space of $A(\Gamma)$ (over the real numbers) which does not occur as a row of $A(\Gamma).$
\end{theorem}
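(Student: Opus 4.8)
The plan is to produce an explicit $(0,1)$-vector as the sum of just \emph{two} rows of $A(\Gamma)$, chosen with the help of a geodesic of length $4$. Since $\text{diam}(\Gamma)\ge 4$, I would first fix vertices $v_0,v_4$ with $d(v_0,v_4)=4$ together with a shortest path $v_0v_1v_2v_3v_4$ joining them. Every sub-path of a geodesic is again a geodesic, so I may use facts such as $d(v_0,v_3)=3$ freely. The guiding principle is that a row of $A(\Gamma)$ is exactly the indicator vector of a neighbourhood, so to build a $(0,1)$-vector in the row space that is not a row it suffices to add two rows whose supports are disjoint, and then to forbid any single vertex from realising their union as its neighbourhood.

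The first instinct is $V=R_{v_0}+R_{v_4}$. Because $d(v_0,v_4)=4$, the sets $\text{nbd}(v_0)$ and $\text{nbd}(v_4)$ are disjoint (a common neighbour would force distance $2$), so $V$ is a genuine nonzero $(0,1)$-vector in $R(A(\Gamma))$. The difficulty, and the real obstacle of the whole statement, is that such a vector can still coincide with a row: for $\Gamma=P_5$ one checks directly that $R_{v_0}+R_{v_4}=R_{v_2}$. Thus the hard part is not \emph{finding} a $(0,1)$-vector in the row space but \emph{certifying} that it is not itself a row.

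The idea that overcomes this is to break the symmetry and instead use the pair $v_0,v_3$, setting $V=R_{v_0}+R_{v_3}$. Since $d(v_0,v_3)=3\ge 3$, again $\text{nbd}(v_0)\cap\text{nbd}(v_3)=\emptyset$, so $V$ is a nonzero $(0,1)$-vector lying in $R(A(\Gamma))$. I would then argue by contradiction that $V$ is not a row: if $V=R_x$ for some vertex $x$, then $\text{nbd}(x)=\text{nbd}(v_0)\cup\text{nbd}(v_3)$, so $x$ is adjacent to $v_1\in\text{nbd}(v_0)$ and to $v_4\in\text{nbd}(v_3)$. This gives $d(v_0,v_4)\le d(v_0,v_1)+d(v_1,x)+d(x,v_4)=3$, contradicting $d(v_0,v_4)=4$. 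The asymmetry is precisely what makes the argument succeed: keeping one endpoint at $v_3$ retains the ``overshoot'' vertex $v_4$ as a forced neighbour of the hypothetical $x$, a leverage that the symmetric choice $v_0,v_4$ fails to supply.

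Finally I would observe that this single computation disposes of every case $\text{diam}(\Gamma)\ge 4$ simultaneously, with no need to separate $\text{diam}=4$ from larger diameters, since all that is required is one geodesic of length $4$, which exists as soon as the diameter is at least $4$. The only auxiliary facts to record are the two elementary ones used above: vertices at distance $\ge 3$ have disjoint neighbourhoods, and $V$ is nonzero because $\Gamma$ is connected with at least one edge, so $\text{nbd}(v_0)\neq\emptyset$.
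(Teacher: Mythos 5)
Your proof is correct and uses essentially the same strategy as the paper: sum two rows indexed by vertices lying on a geodesic at mutual distance at least $3$, use that distance to get disjoint neighbourhoods (hence a nonzero $(0,1)$-vector in the row space), and then derive a length-$3$ shortcut contradicting the diameter hypothesis if that sum were itself a row. The paper takes $R_2+R_{\ell+1}$ on a full geodesic $v_1\sim\cdots\sim v_{\ell+1}$ and phrases the disjointness via block-matrix lemmas, while you take $R_{v_0}+R_{v_3}$ on a truncated length-$4$ geodesic and argue directly with neighbourhoods, but the underlying mechanism is identical.
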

Now, we consider all graphs of diameter $\leq 3.$ In this case, we present some progress in support of the conjecture. Note that a graph is of diameter $1$ if and only if it is complete. For the complete graph $K_n,$ the vector $(\underbrace{1, 1, \cdots, 1}_{n \text{ entries}})$ serves the purpose. In fact,
\begin{equation}\label{dis: complete graph satis Conj}
(\underbrace{1, 1, \cdots, 1}_{n \text{ entries}})=\frac{1}{(n-1)}\sum\limits_{i=1}^nR_{i}.
\end{equation}
So, here we focus on the non-complete graph $\Gamma$ such that $\text{diam}(\Gamma)\leq 3.$ In this case, we have following two theorems. 
\begin{theorem}\label{thm: rank 5 graph follows conj}
Let $\Gamma$ be a simple non-complete connected  graph such that $\text{diam}(\Gamma)\leq 3$ and $r(\Gamma)\leq 5.$ Then there exists a non-zero $(0,1)$-vector in the row space of $A(\Gamma)$ (over the real numbers) which does not occur as a row of $A(\Gamma).$	
\end{theorem}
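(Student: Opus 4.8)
The plan is to reduce the theorem to a finite check on the \emph{reduced} base graphs and then to propagate the conclusion through vertex multiplication. By Theorem \ref{thm: cha of graphs rank leq 3}, Lemma \ref{lem: char of graph of rank 4 with diam 2 and 3} and Lemma \ref{lem: ch of graphs of rank five graphs and diam leq 3}, every simple connected $\Gamma$ with $\mathrm{diam}(\Gamma)\le 3$ and $r(\Gamma)\le 5$ satisfies $\Gamma\in\mathcal{M}(\Gamma_0)$ for some base graph $\Gamma_0$ in the explicit finite list
\[
\{\,K_2,\ K_3,\ B_1,\dots,B_5,\ P_4,\ K_4,\ D_2,\dots,D_{24}\,\}
\]
(there is nothing to check for $r(\Gamma)\le 1$, which cannot occur for a graph with an edge; the cases $r=2,3$ give $\mathcal{M}(K_2),\mathcal{M}(K_3)$, whose members automatically have diameter $\le 2$). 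So it suffices to produce, for each base graph, a non-zero $(0,1)$-vector in its row space that is not a row, and then to pass to an arbitrary $\Gamma\in\mathcal{M}(\Gamma_0)$.

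The central observation I would use is the following. Suppose $\Gamma$ contains an edge $uv$ with $\mathrm{nbd}(u)\cap\mathrm{nbd}(v)=\emptyset$. Then $R_u+R_v$ is a $(0,1)$-vector, since the supports of $R_u$ and $R_v$ are disjoint and no coordinate can exceed $1$. It is non-zero, having a $1$ in positions $u$ and $v$ because $u\sim v$, and its support equals $\mathrm{nbd}(u)\cup\mathrm{nbd}(v)$, which contains $u$. If $R_u+R_v$ were some row $R_w$, then $\mathrm{nbd}(w)=\mathrm{nbd}(u)\cup\mathrm{nbd}(v)\ni u$ would force $w\sim u$, hence $w\in\mathrm{nbd}(u)\subseteq\mathrm{nbd}(w)$, contradicting the absence of loops. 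Thus $R_u+R_v\in R(A(\Gamma))$ is a witness for the conjecture. Crucially, by Lemma \ref{lem: nbd umeet nbd v empty after multiplication} the property ``there is an edge with disjoint neighbourhoods'' is inherited by every graph in $\mathcal{M}(\Gamma_0)$, so once a base graph carries such an edge the entire family is settled at once, with no recourse to blow-ups.

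It then remains to split the base graphs into two cases. In the first case $\Gamma_0$ has an edge with disjoint neighbourhoods; this holds whenever $\Gamma_0$ has a pendant edge (a degree-$1$ endpoint $v$ has $\mathrm{nbd}(v)=\{u\}$, and $u\notin\mathrm{nbd}(u)$) or is triangle-free (a common neighbour of an edge would create a triangle). Inspecting Figures \ref{fig:rank 4 graphs} and \ref{fig:rank 5 graphs}, the graphs $B_1,\dots,B_5$, $P_4$, and all but a few of the $D_i$ (for example $C_5=D_{23}$, and every $D_i$ carrying a pendant) fall here, and the preceding paragraph finishes them. In the second case every edge of $\Gamma_0$ lies in a triangle; this comprises the complete graphs $K_2,K_3,K_4$ together with the few dense, reduced, full-rank five-vertex representatives among the $D_i$ (for instance the bowtie $D_{22}$ and the graphs $D_{21},D_{24}$). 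Here I would take the all-ones vector $\mathbf 1$: it is never a row, since each row has a zero on the diagonal, and it lies in $R(A(\Gamma_0))$ --- for a complete graph by \eqref{dis: complete graph satis Conj}, and for a reduced rank-$5$ graph on five vertices because its row space is all of $\mathbb{R}^5$. Lemma \ref{lem: v is in row space of graph impl v,some extra is in row space of graph with multiplication} then places the blow-up of $\mathbf 1$ (again the all-ones vector) in $R(A(\Gamma))$ for every multiplication, completing these families as well.

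The main obstacle is the bookkeeping of this case split: one must verify, against Figures \ref{fig:rank 4 graphs} and \ref{fig:rank 5 graphs}, that each of the roughly thirty reduced base graphs --- above all the twenty-three graphs $D_2,\dots,D_{24}$ --- either carries an edge with disjoint neighbourhoods or is one of the triangle-dense exceptions handled by $\mathbf 1$. The disjoint-neighbourhood criterion disposes of the overwhelming majority instantly, and the residual triangle-dense graphs turn out to be few and small (complete, or full-rank on five vertices), so the all-ones vector always applies; confirming that no base graph escapes both mechanisms is the only genuinely laborious part of the argument.
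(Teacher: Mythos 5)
Your reduction and your two mechanisms are exactly those of the paper: the same classification theorems, the same blow-up lemma (Lemma \ref{lem: v is in row space of graph impl v,some extra is in row space of graph with multiplication}), the disjoint-neighbourhood edge criterion (which is the paper's Theorem \ref{thm: v edge u; nbd u meet nbd v=null} combined with Lemma \ref{lem: nbd umeet nbd v empty after multiplication}), and the all-ones vector for complete or non-singular base graphs. However, there is a genuine gap at the decisive point of your case split: it is \emph{false} that the base graphs in which every edge lies in a triangle are only the complete graphs and the full-rank graphs on five vertices. The residual class also contains $D_6$ (seven vertices), $D_{14}$, $D_{15}$ (six vertices), and $D_{17}$, $D_{18}$ (seven vertices); each of these has rank $5$ but more than five vertices, so its adjacency matrix is \emph{singular}, and neither of your two mechanisms applies to it. Worse, for $D_6$ your fallback provably fails: with the paper's vertex ordering, the vector $(-1,0,1,0,-1,0,0)^t$ lies in the null space of $A(D_6)$, and it is not orthogonal to the all-ones vector; since $A(D_6)$ is symmetric, the row space is the orthogonal complement of the null space, so the all-ones vector is \emph{not} in $R(A(D_6))$. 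Thus the step you dismissed as ``the only genuinely laborious part'' (confirming no base graph escapes both mechanisms) is precisely where the argument breaks: five base graphs escape both.

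The paper closes exactly this gap with additional work. For $D_{10},D_{21},D_{22},D_{24}$ (genuinely non-singular on five vertices) it argues as you do; for $D_{18}$ it invokes Lemma \ref{lemm: one domand other degree m} (a unique dominating vertex, all other degrees equal); and for $D_6,D_{14},D_{15},D_{17}$ it exhibits explicit rational combinations of rows, e.g.\ $\tfrac{1}{2}(R_4+R_5+R_6)=(1,1,1,1,1,1)\in R(A(D_{15}))$ and, for $D_6$, the vector $(0,1,1,1,1,1,1)=\tfrac{1}{2}(-R_2+R_3+2R_4+R_6)$ --- necessarily not the all-ones vector, by the computation above. To repair your proof you would have to add these five verifications, and for a non-constant witness such as the one for $D_6$ you would also need the (easy, but not stated in your proposal) check that its blow-up under Lemma \ref{lem: v is in row space of graph impl v,some extra is in row space of graph with multiplication} still does not occur as a row of the multiplied graph.
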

Let $\mathcal{H}$ be the family of graphs defined as in Example \ref{Exam: duplication and family H}. Note that, $C_5, \Gamma_0, \Gamma_5\in \mathcal{H}$ and the diameter of the each of the graphs is $2.$ Also it can be shown that $r(C_5)=5, r(\Gamma_0)=7$ and $r(\Gamma_5)=10.$ Let $\mathcal{D}$ be the collection of all graphs that contains
\begin{enumerate}
\item the graph $\Gamma_0;$ and
\item is closed under degree-$2$ vertex duplication. 
\end{enumerate}
Clearly, $\mathcal{D}\subseteq\mathcal{H}.$ Let $\Gamma\in \mathcal{D},$ then by Proposition \ref{Prop: rank of graph and its multipl aresame} $r(\Gamma)=7,$ (as $r(\Gamma_0)=7).$ Therefore, the family $\mathcal{H}$ contains infinitely many graphs having rank $7.$ The next theorem tells us that each graph of the family $\mathcal{H}$ satisfies the conjecture.
\begin{theorem}\label{thm: dim 2 graph m=2n-5}  
Let $\Gamma$ be a diameter-$2$ graph of order $r$ size $s$ with no dominating vertex. Suppose $s=2r-5.$ Then there exists a non-zero $(0,1)$-vector in the row space of $A(\Gamma)$ (over the real numbers) which does not occur as a row of $A(\Gamma).$
\end{theorem}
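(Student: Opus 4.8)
The plan is to reduce the statement to the three explicit ``seed'' graphs generating the family $\mathcal{H}$ and then to transport a single vector---the all-ones vector---through the multiplication operation. First, since $\Gamma$ has diameter $2$, no dominating vertex, and size $s = 2r-5$, Theorem \ref{thm H-M: dim2 m  geq 2n-5} forces $\Gamma \in \mathcal{H}$, where $\mathcal{H}$ is the family of Example \ref{Exam: duplication and family H} generated from $C_5$, $\Gamma_0$, and the Petersen graph $\Gamma_5$ by degree-$2$ vertex duplication. Thus it suffices to produce the desired vector for every member of $\mathcal{H}$.

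The key structural observation I would establish is that
\[
\mathcal{H} \subseteq \mathcal{M}(C_5) \cup \mathcal{M}(\Gamma_0) \cup \mathcal{M}(\Gamma_5).
\]
This I would prove by induction on the number of duplications. By Definition \ref{def: multiplication of vetex}, duplicating a vertex is exactly a multiplication in which a single coordinate of $m$ equals $2$, so each seed lies in its own multiplication class. For the inductive step, suppose $\Gamma' = \Gamma_{\mathrm{base}} \bigodot m \in \mathcal{M}(\Gamma_{\mathrm{base}})$ and we duplicate a degree-$2$ vertex $w$ of $\Gamma'$. The vertex $w$ is a copy $v_i^{s}$ of some original vertex $v_i$, and every copy of $v_i$ shares the same neighbourhood in $\Gamma'$ (namely $\{v_j^{t} : v_i \sim v_j\}$, which excludes the other copies of $v_i$). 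Hence the newly added vertex is simply one more copy of $v_i$, and the resulting graph is $\Gamma_{\mathrm{base}} \bigodot m'$ with $m'_i = m_i + 1$ and $m'_j = m_j$ otherwise. This keeps us inside $\mathcal{M}(\Gamma_{\mathrm{base}})$, completing the induction.

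It then remains only to handle the three seeds and to transfer the conclusion along $\bigodot m$. For each seed the adjacency matrix is nonsingular: $r(C_5)=5$ by Lemma \ref{Lemm: rank of path and cycle}, while $r(\Gamma_0)=7$ and $r(\Gamma_5)=10$, as already noted in the discussion preceding Theorem \ref{thm: dim 2 graph m=2n-5}. Since each seed has full rank, the row space of each seed is the whole space and contains the all-ones $(0,1)$-vector $(1,1,\ldots,1)$. Writing $\Gamma = \Gamma_{\mathrm{base}} \bigodot m$, Lemma \ref{lem: v is in row space of graph impl v,some extra is in row space of graph with multiplication} applied to the all-ones vector shows that its blow-up---which is again the all-ones vector of $\Gamma$---lies in $R(A(\Gamma))$. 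Finally, the all-ones vector can never occur as a row of any adjacency matrix, because the diagonal entries are all $0$; in particular it is not a row of $A(\Gamma)$. This yields the required non-zero $(0,1)$-vector.

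The main obstacle I anticipate is the structural reduction $\mathcal{H} \subseteq \mathcal{M}(C_5)\cup\mathcal{M}(\Gamma_0)\cup\mathcal{M}(\Gamma_5)$: one must check carefully that duplicating a degree-$2$ vertex which was \emph{itself} created by an earlier duplication still amounts to increasing a single coordinate of $m$, so that no graph of $\mathcal{H}$ escapes the multiplication classes of the three seeds. Once this containment is secured, the remaining steps are immediate, since the all-ones vector automatically lies in the (full) row space of each seed and, having a would-be nonzero diagonal entry, is automatically not a row.
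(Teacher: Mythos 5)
Your proof is correct, but it takes a genuinely different route from the paper in the key step. Both arguments begin the same way: Theorem \ref{thm H-M: dim2 m  geq 2n-5} forces $\Gamma\in\mathcal{H}$, and both need the structural fact that every member of $\mathcal{H}$ is a vertex multiplication of one of the three seeds, i.e.\ $\mathcal{H}\subseteq\mathcal{M}(C_5)\cup\mathcal{M}(\Gamma_0)\cup\mathcal{M}(\Gamma_5)$ --- a fact the paper uses implicitly but which you prove explicitly by induction on the number of duplications (your observation that duplicating a copy $v_i^s$ just increments $m_i$ is exactly right, and making it explicit is a genuine improvement). After that the mechanisms diverge. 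The paper selects in each seed an edge $uv$ with $\text{nbd}(u)\cap\text{nbd}(v)=\emptyset$ (the red vertices in Figure \ref{fig:graph of diam 2;cycle 0f length $5$ and its duplication}; for $C_5$ and the Petersen graph any edge works since the girth is $5$), transports this property through multiplication via Lemma \ref{lem: nbd umeet nbd v empty after multiplication}, and takes $R_u+R_{v}$ as the witness, via Theorem \ref{thm: v edge u; nbd u meet nbd v=null}. You instead exploit that all three seeds are nonsingular, so the all-ones vector lies in each seed's row space, transport it via Lemma \ref{lem: v is in row space of graph impl v,some extra is in row space of graph with multiplication}, and observe that the all-ones vector is never a row because the diagonal of an adjacency matrix vanishes. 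The trade-off: the paper's route needs no rank information, only inspection of the seeds, and uses machinery it needs anyway for Theorem \ref{thm: rank 5 graph follows conj}; your route leans on the nonsingularity claims $r(C_5)=5$, $r(\Gamma_0)=7$, $r(\Gamma_5)=10$, which the paper asserts without proof (for $C_5$ this is Lemma \ref{Lemm: rank of path and cycle}, for the Petersen graph it follows from its spectrum $\{3,1^{(5)},(-2)^{(4)}\}$, but $\Gamma_0$ requires a direct $7\times 7$ computation you should include to be self-contained). In exchange your witness vector is simpler and needs no contradiction argument to see it is not a row.
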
 
\section{Proof of main results}\label{Sec: proof of main thm}
In this section, we present the proofs of our main theorems.  
\begin{proof}[Proof of Theorem \ref{thm: proof of conj diam geq 4}]
Let $\Gamma$ be a simple connected graph with a diameter of at least 4. Denote the vertex set of $\Gamma$ as $V(\Gamma) = \{v_1, v_2, \ldots, v_n\}$. Given that $\text{diam}(\Gamma) \geq 4$, there exists a shortest path, denoted as $\mathcal{P} = v_1 \sim v_2 \sim v_3 \sim v_4 \sim v_5$, which has a length of 4. Let $R_i$ represent the $i^{\text{th}}$ row of the adjacency matrix $A(\Gamma)$. We assert that the sum $R_2 + R_5$ forms a non-zero $(0, 1)$-vector within the row space of $A(\Gamma)$ (considered over the real numbers) that does not correspond to any row of $A(\Gamma)$. It is evident that
\begin{equation}\label{eq: required vector}
R_{2} + R_{5} = (1, 0, 1, 1, 0, \underbrace{*, \ldots, *}_{(n-5) \text{ entries}}).
\end{equation}
It is important to note that the remaining $(n-5)$ entries of $R_2 + R_5$ can only be either 0 or 1. If not, there would exist a vertex $v \in V(\Gamma)$ such that $v_2 \sim v \sim v_5$, leading to a shortest path $v_1 \sim v_2 \sim v \sim v_5$ of length 3. This would contradict the established distance $d(v_1, v_5) = 4$.

Next, we demonstrate that $R_2 + R_5$ cannot be represented as a row of $A(\Gamma)$. Assume, for contradiction, that there exists an index $i \in [n]$ such that $R_i = R_2 + R_5$. The first and fourth entries of the vector $R_2 + R_5$ are both 1, indicating that the vertex $v_i$ is adjacent to both $v_1$ and $v_5$. Consequently, this would create a path $v_1 \sim v_i \sim v_4\sim v_5.$ This contradicts that $d(v_1, v_5) = 4$. This completes the proof.
\end{proof}
In this portion, we shift our attention to prove the remaining theorems, that is, Theorems \ref{thm: rank 5 graph follows conj}, \ref{thm: dim 2 graph m=2n-5}. To prove these theorems, first we need to prove the following results.
\begin{theorem}\label{thm: v edge u; nbd u meet nbd v=null}
Let $\Gamma$ be any simple connected graph.  Suppose $\Gamma$ has at least two distinct vertices $v_i, v_j$ such that $v_i\sim v_j$ and $\text{nbd}(v_i)\cap \text{nbd}(v_j)=\emptyset.$ Then there exists a non-zero $(0,1)$-vector in the row space of $A(\Gamma)$ (over the real numbers) which does not occur as a row of $A(\Gamma).$
\end{theorem}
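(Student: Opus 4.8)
The plan is to take the sum $R_i + R_j$ of the two rows corresponding to the given vertices as the desired vector, mirroring the strategy already employed in the proof of Theorem~\ref{thm: proof of conj diam geq 4}. First I would verify that $R_i + R_j$ is a $(0,1)$-vector. The $k$-th coordinate of $R_i + R_j$ equals $a_{i,k} + a_{j,k}$, and this can attain the value $2$ only when $v_k$ is adjacent to both $v_i$ and $v_j$, that is, only when $v_k \in \text{nbd}(v_i) \cap \text{nbd}(v_j)$. Since this intersection is empty by hypothesis, no coordinate equals $2$, so every entry lies in $\{0,1\}$.

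Next I would check that $R_i + R_j$ is non-zero. Because $v_i \sim v_j$, we have $a_{i,j} = a_{j,i} = 1$, while the diagonal entries $a_{i,i}$ and $a_{j,j}$ vanish. Hence the $i$-th coordinate of $R_i + R_j$ is $a_{i,i} + a_{j,i} = 1$ and its $j$-th coordinate is $a_{i,j} + a_{j,j} = 1$. In particular the vector is non-zero, and crucially it carries a $1$ in both positions $i$ and $j$; this pair of forced ones is exactly the structural feature I will exploit in the last step.

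The heart of the argument is to show that $R_i + R_j$ does not appear as a row of $A(\Gamma)$. I would argue by contradiction: suppose $R_k = R_i + R_j$ for some $k \in [n]$. Reading off the $i$-th and $j$-th coordinates forces $a_{k,i} = 1$ and $a_{k,j} = 1$, so $v_k$ is adjacent to both $v_i$ and $v_j$, whence $v_k \in \text{nbd}(v_i) \cap \text{nbd}(v_j)$, contradicting the hypothesis. One must still dispose of the degenerate cases $k = i$ and $k = j$: if $k = i$, then the $i$-th entry of $R_k = R_i$ is $a_{i,i} = 0$, contradicting the fact that the $i$-th entry of $R_i + R_j$ is $1$, and the case $k = j$ is symmetric. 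Thus no such $k$ exists, and $R_i + R_j$ is the required non-zero $(0,1)$-vector in $R(A(\Gamma))$ not occurring as a row. I do not expect a genuine obstacle here; the only point requiring a little care is ensuring that the common-neighbour contradiction also rules out $k \in \{i,j\}$, which the vanishing diagonal entries settle immediately.
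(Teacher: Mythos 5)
Your proposal is correct and is essentially the paper's own proof: the paper also takes the sum of the two rows (after a harmless relabeling and reordering of vertices to write the rows explicitly), observes it is a non-zero $(0,1)$-vector precisely because the neighbourhoods are disjoint and the vertices are adjacent, and rules out its appearance as a row via the same common-neighbour contradiction. Your explicit treatment of the cases $k=i$ and $k=j$ is a small extra point of care that the paper leaves implicit, but it does not change the argument.
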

\begin{proof}
Let $V(\Gamma)=\{v_1, \cdots, v_n\}.$ Without loss of generality we assume that $v_1\sim v_2$ and $\text{nbd}(v_1)\cap \text{nbd}(v_2)=\emptyset.$ Suppose that $\text{nbd}(v_1)=\{v_{11}, \cdots, v_{1k}\}$ and $\text{nbd}(v_2)=\{v_{21}, \cdots, v_{2r}\}.$ Now, we arrange the vertices of $\Gamma$ in the following order:
\[v_1, v_2, v_{11}, \cdots, v_{1k}, v_{21}, \cdots, v_{2r}, u_1,  \cdots, u_t,\] where $t=n-(k+r+2)$ and $u_i\in V(\Gamma)\setminus\left(\{v_1, v_2\}\cup \text{nbd}(v_1)\cup\text{nbd}(v_2)\right).$  	
Now, it is easy to see that the first row and the second row of $A(\Gamma)$ are
\begin{align*}
R_{1}&=(\underbrace{0,1,\cdots, 1}_{k+2 \text{ entries}},\underbrace{0, \cdots, 0}_{r \text{ entries}}, \underbrace{0,\cdots, 0}_{t \text{ entries}})\\
R_{2}&=(\underbrace{1,0,\cdots, 0}_{k+2 \text{ entries}},\underbrace{1, \cdots, 1}_{r \text{ entries}}, \underbrace{0,\cdots, 0}_{t \text{ entries}}).
\end{align*}
\[\text{ As a result, } R_1+R_2=(\underbrace{1,1,\cdots, 1}_{k+2 \text{ entries}},\underbrace{1, \cdots, 1}_{r \text{ entries}}, \underbrace{0,\cdots, 0}_{t \text{ entries}}).\]
Our claim is that the vector $R_{1}+R_{2}$ serves the purpose. Clearly, $R_1+R_2$ is a non zero $(0,1)$-vector. To complete the proof we have to show that the vector $R_{1}+R_{2}$ does not occur as a row of $A(\Gamma).$ Suppose there exists $i\in [n]$ such that $R_i=R_{1}+R_{2}.$ Then $v_i$ is adjacent to both of the vertices $v_1$ and $v_2$ (since the first two entries of $R_{1}+R_{2}$ are $1).$ That is, $v_i\in\text{nbd}(v_1)\cap \text{nbd}(v_2).$ This contradicts the fact $\text{nbd}(v_1)\cap \text{nbd}(v_2)=\emptyset.$ Hence the theorem.
\end{proof}
\begin{remark}
Note that Theorem \ref{thm: v edge u; nbd u meet nbd v=null} does not depend on the diameter of graph. 	
\end{remark}
\begin{corollary}\label{cor: graph has one pendent vertex, conj hold}
Let $\Gamma$ be a simple connected graph having a vertex $v\in V(\Gamma)$ such that $\text{deg}(v)=1.$ Then $A(\Gamma)$ satisfies Conjecture \ref{conj: Cameron on row space}.	
\end{corollary}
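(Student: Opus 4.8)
The plan is to reduce Corollary \ref{cor: graph has one pendent vertex, conj hold} to Theorem \ref{thm: v edge u; nbd u meet nbd v=null}, which has just been established. The key observation is that a degree-$1$ vertex furnishes exactly the local configuration that Theorem \ref{thm: v edge u; nbd u meet nbd v=null} requires, so essentially no new work is needed beyond identifying the right pair of vertices.

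First I would let $v\in V(\Gamma)$ be a vertex with $\text{deg}(v)=1$, and let $u$ be its unique neighbour, so that $\text{nbd}(v)=\{u\}$ and $v\sim u$. The aim is to apply Theorem \ref{thm: v edge u; nbd u meet nbd v=null} to the pair $(v,u)$, which demands that $\text{nbd}(v)\cap\text{nbd}(u)=\emptyset$. Suppose for contradiction that some vertex $w$ lies in $\text{nbd}(v)\cap\text{nbd}(u)$. Then in particular $w\in\text{nbd}(v)=\{u\}$, forcing $w=u$; but $u\notin\text{nbd}(u)$ since $\Gamma$ is a simple graph with no loops, a contradiction. Hence $\text{nbd}(v)\cap\text{nbd}(u)=\emptyset$.

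With the hypotheses of Theorem \ref{thm: v edge u; nbd u meet nbd v=null} verified for the pair $(v,u)$, that theorem immediately yields a non-zero $(0,1)$-vector in the row space of $A(\Gamma)$ (over $\mathbb{R}$) which does not occur as a row of $A(\Gamma)$. Since this is precisely the conclusion of Conjecture \ref{conj: Cameron on row space}, the graph $\Gamma$ satisfies the conjecture, completing the argument.

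There is no genuine obstacle here: the entire content is the one-line check that a pendant vertex and its neighbour have disjoint neighbourhoods, which is forced by the absence of loops. The only point worth stating carefully is that $\text{nbd}(v)$ is the singleton $\{u\}$, so that the intersection $\text{nbd}(v)\cap\text{nbd}(u)$ can only contain $u$, and then that $u\notin\text{nbd}(u)$ rules this out. Everything else is inherited wholesale from Theorem \ref{thm: v edge u; nbd u meet nbd v=null}.
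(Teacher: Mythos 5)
Your proposal is correct and follows exactly the same route as the paper: the paper's proof is the one-line observation that if $\text{deg}(v)=1$ and $v\sim u$, then $\text{nbd}(u)\cap\text{nbd}(v)=\emptyset$, followed by an appeal to Theorem \ref{thm: v edge u; nbd u meet nbd v=null}. Your version merely spells out the (trivial) verification that the intersection is empty, which the paper leaves implicit.
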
 
\begin{proof}
Suppose $\text{deg}(v)=1$ and $v\sim u.$ Then $\text{nbd}(u)\cap \text{nbd}(v)=\emptyset.$	
\end{proof}
\begin{corollary}\label{cor: tree satisfies conj} 
Let $T$ be a tree. Then $A(T)$ satisfies Conjecture \ref{conj: Cameron on row space}.
\end{corollary}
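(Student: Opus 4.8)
The plan is to reduce the statement immediately to Corollary~\ref{cor: graph has one pendent vertex, conj hold}. For the conjecture to be meaningful, the tree $T$ must have at least one edge, so $T$ has order $n\geq 2$. The only input I need is the classical fact that every finite tree on at least two vertices contains a pendant vertex; granting this, the result is immediate.

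To establish the existence of a pendant vertex I would argue via a longest path. Let $P=v_0v_1\cdots v_k$ be a path of maximum length in $T$; such a path exists since $T$ is finite, and $k\geq 1$ because $T$ is connected with $n\geq 2$. I claim $v_0$ is pendant. Certainly $v_0\sim v_1$, so $\text{deg}(v_0)\geq 1$. Suppose $v_0$ had a second neighbour $w$. If $w\notin P$, then $w\,v_0v_1\cdots v_k$ is a strictly longer path, contradicting the maximality of $P$. If $w=v_j$ for some $j\geq 2$, then the edges of $v_0v_1\cdots v_j$ together with the edge $v_jv_0$ form a cycle, contradicting the fact that $T$ is acyclic. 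Hence $\text{deg}(v_0)=1$, so $v=v_0$ is a pendant vertex of $T$.

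With this pendant vertex in hand, Corollary~\ref{cor: graph has one pendent vertex, conj hold} applies verbatim and shows that $A(T)$ satisfies Conjecture~\ref{conj: Cameron on row space}, which finishes the proof. There is no genuine obstacle here: the substantive work is already carried by the preceding corollary (and ultimately by Theorem~\ref{thm: v edge u; nbd u meet nbd v=null}), and the present statement only records the observation that trees always provide the required pendant vertex.
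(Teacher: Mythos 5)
Your proof is correct and follows the same route as the paper, which states this corollary as an immediate consequence of Corollary \ref{cor: graph has one pendent vertex, conj hold} (the paper leaves the existence of a pendant vertex in a tree implicit, while you spell it out via a longest-path argument). The reduction is exactly what the paper intends, so there is nothing to add.
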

Let $\Gamma$ be a simple connected graph such that $\text{diam}(\Gamma)= 3.$ Then there exist at least two vertices $u, v\in V(\Gamma)$ such that $nbd(u)\cap nbd(v)=\emptyset.$  Otherwise, $\text{diam}(\Gamma)$ would be equal to $2.$ Suppose $u\sim v$ in $\Gamma.$ Then by Theorem \ref{thm: v edge u; nbd u meet nbd v=null}, we can say that $A(\Gamma)$ satisfies the conjecture. In Example \ref{exm: diam 3}, we construct a family of graphs of diameter $3$ that satisfy the conjecture.
 \begin{figure}
		\tiny
		\tikzstyle{ver}=[]
		\tikzstyle{vert}=[circle, draw, fill=black!100, inner sep=0pt, minimum width=4pt]
		\tikzstyle{vertex}=[circle, draw, fill=black!00, inner sep=0pt, minimum width=4pt]
		\tikzstyle{edge} = [draw,thick,-]
		\tikzstyle{node_style} = [circle,draw=blue,fill=blue!20!,font=\sffamily\Large\bfseries]
		\centering
		\begin{tikzpicture}[scale=1]
		\tikzstyle{edge_style} = [draw=black, line width=2mm, ]
		\tikzstyle{node_style} = [draw=blue,fill=blue!00!,font=\sffamily\Large\bfseries]
		\node (B1) at (.75,-1.3)   {$\bf{E_1}$};
		\node (B1) at (3.75,-1.3)   {$\bf{E_2}$};
		\node (B1) at (6.75,-1.3)   {$\bf{E_3}$};
		\fill[red!100!](0,0) circle (.07);
		\fill[black!100!](1.5,0) circle (.07);
		\fill[black!100!](1.5,1.5) circle (.07);
		\fill[red!100!](0,1.5) circle (.07);
		\fill[black!100!](.75,2) circle (.07);
		\fill[black!100!](.75,-.5) circle (.07);
		%
		%
		\draw[line width=.2 mm] (0,0) -- (0,1.5);
		\draw[line width=.2 mm] (1.5,1.5) -- (1.5,0);
		\draw[line width=.2 mm] (.75,2) -- (0,1.5);
		\draw[line width=.2 mm] (.75,2) -- (1.5,1.5);
		\draw[line width=.2 mm] (.75,-.5) -- (1.5,0);
		\draw[line width=.2 mm] (.75,-.5) -- (0,0);
		\fill[red!100!](3,0) circle (.07);
		\fill[black!100!](4.5,0) circle (.07);
		\fill[black!100!](4.5,1.5) circle (.07);
		\fill[red!100!](3,1.5) circle (.07);
		\fill[black!100!](3.75,2) circle (.07);
		\fill[black!100!](3.75,-.5) circle (.07);
		%
		%
		\draw[line width=.2 mm] (3,0) -- (3,1.5);
		\draw[line width=.2 mm] (4.5,1.5) -- (4.5,0);
		\draw[line width=.2 mm] (3.75,2) -- (3,1.5);
		\draw[line width=.2 mm] (3.75,2) -- (4.5,1.5);
		\draw[line width=.2 mm] (3.75,-.5) -- (4.5,0);
		\draw[line width=.2 mm] (3.75,-.5) -- (3,0);
		\draw[line width=.2 mm] (3.75,-1) -- (4.5,0);
		\draw[line width=.2 mm] (3.75,-1) -- (3,0);
		\fill[black!100!](3.75,-1) circle (.07);
		%
		\draw[line width=.2 mm] (3.4,1.4) -- (3.75,2);
		\draw[line width=.2 mm] (3.4,1.4) -- (3,0);
		\fill[black!100!](3.4,1.4) circle (.07);
		%
		\draw[line width=.2 mm] (3.9,.3) -- (3.75,-.5);
		\draw[line width=.2 mm] (3.9,.3) -- (4.5, 1.5);
		\fill[black!100!](3.9,.3) circle (.07);
		%
		%
		\fill[red!100!](6,0) circle (.07);
		\fill[black!100!](7.5,0) circle (.07);
		\fill[black!100!](7.5,1.5) circle (.07);
		\fill[red!100!](6,1.5) circle (.07);
		\fill[black!100!](6.75,2) circle (.07);
		\fill[black!100!](6.75,-.5) circle (.07);
		%
		%
		\draw[line width=.2 mm] (6,0) -- (6,1.5);
		\draw[line width=.2 mm] (7.5,1.5) -- (7.5,0);
		\draw[line width=.2 mm] (6.75,2) -- (6,1.5);
		\draw[line width=.2 mm] (6.75,2) -- (7.5,1.5);
		\draw[line width=.2 mm] (6.75,-.5) -- (7.5,0);
		\draw[line width=.2 mm] (6.75,-.5) -- (6,0);
		\draw[line width=.2 mm] (6.75,.5) -- (6,0);
		\draw[line width=.2 mm] (6.75,.5) -- (7.5,0);
		\fill[black!100!](6.75,.5) circle (.07);
		\draw[line width=.2 mm] (6.75,.8) -- (6,0);
		\draw[line width=.2 mm] (6.75,.8) -- (7.5,0);
		\fill[black!100!](6.75,.8) circle (.07);
		\draw[line width=.2 mm] (6.75,1.2) -- (6,0);
		\draw[line width=.2 mm] (6.75,1.2) -- (7.5,0);
		\fill[black!100!](6.75,1.2) circle (.07);
		\draw[line width=.2 mm] (6.75,1.5) -- (6,0);
		\draw[line width=.2 mm] (6.75,1.5) -- (7.5,0);
		\fill[black!100!](6.75,1.5) circle (.07);
		\draw[line width=.2 mm] (6.75,2.3) -- (7.5,1.5);
		\draw[line width=.2 mm] (6.75,2.3) -- (6,1.5);
		\fill[black!100!](6.75,2.3) circle (.07);
		\draw[line width=.2 mm] (6.75,2.6) -- (7.5,1.5);
		\draw[line width=.2 mm] (6.75,2.6) -- (6,1.5);
		\fill[black!100!](6.75,2.6) circle (.07);
		\end{tikzpicture}
		\caption{ $E_1$ is the cycle $C_6, E_2$ is the graph obtained from $E_6$ after duplicating three degree-$2$ vertices and $E_3$ is the graph obtained from $C_6$ after duplicating six degree-$2$ vertices}
		\label{fig:graph of diam 3; duplicating cycle c6}	
\end{figure}
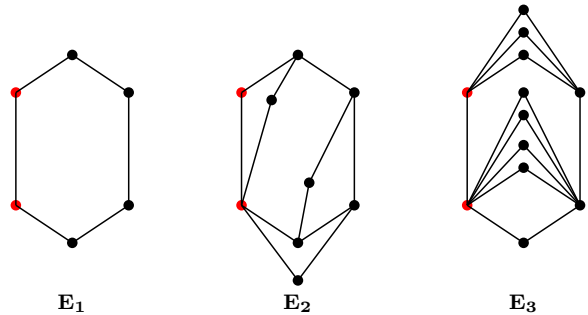
\begin{example}\label{exm: diam 3}
Let $\mathcal{G}$ be the family of graphs that; 
\begin{enumerate}
\item	
contains the cycle $C_6,$ and
\item
is closed under degree-$2$ vertex duplication, (see Figure \ref{fig:graph of diam 3; duplicating cycle c6}). 
\end{enumerate}
Note that the family $\mathcal{G}$ contains infinitely many graphs. Also, for any  $\Gamma\in \mathcal{G},$ the diameter of the graph $\Gamma$ is equal to $3,$ (by Lemma
\ref{lem: diam fixed by multiplication vertices}). Moreover, look at the two red colored vertices (say $u$ and $v$) in each of the graph in Figure \ref{fig:graph of diam 3; duplicating cycle c6}, they are edge connected and it is easy to see that $\text{nbd}(u)\cap\text{nbd}(v)=\emptyset.$ Therefore, by Lemma \ref{lem: nbd umeet nbd v empty after multiplication} and Theorem \ref{thm: v edge u; nbd u meet nbd v=null}, for each $\Gamma\in \mathcal{G},$ the adjacency matrix $A(\Gamma)$ satisfies the conjecture.
\end{example}
The next result is related to the simple connected graph having exactly one dominating vertex. We use this result to prove Theorem \ref{thm: rank 5 graph follows conj}.
\begin{lemma}\label{lemm: one domand other degree m}
Let $\Gamma$ be a non-complete graph with $V(\Gamma)=\{v_1, \cdots, v_n \}.$ Suppose $\Gamma$ has exactly one dominating vertex $v_1$ (say) and $\text{deg}(v_i)=d (1\leq d\leq n-2)$ for all $i\in \{2, \cdots, n\}.$ Then there exists a non-zero $(0,1)$-vector in the row space of $A(\Gamma)$ (over the real numbers) which does not occur as a row of $A(\Gamma).$
\end{lemma}	
\begin{proof}
Here we show that the vector $(\underbrace{1, \cdots, 1}_{n \text{ entries}})\in R(A(\Gamma)).$ First, we arrange the vertices in the order $v_1, v_2, \cdots, v_n.$	
Then the adjacency matrix 
\begin{equation*}\label{eq: adj mat of gen of wheel graph, etc}
A(\Gamma)=\left(
		\begin{array}{c|ccc}
		0 &1 & \cdots&1\\
		\hline
		1 &\\
		\vdots && B\\
		1 &
		\end{array}
		\right),
		\end{equation*}
where $B$ is an $(n-1)$-order $(0, 1)$ matrix having exactly $(d-1)$ one in each column (as $\text{deg}(v_i)=d$ and $(1, i)^{\text{th}}$ entry of $A(\Gamma)$ is $1,$ for each $i\in \{2, \cdots, n\}).$ Therefore,
\begin{equation}
\frac{n-d}{n-1}R_1+\frac{1}{n-1}\left(R_2+\cdots+R_n\right)=(1, \cdots, 1). 
\end{equation}
Hence the lemma.	
\end{proof}
\begin{example} $W_{n+1}$ to denote a wheel graph with $n+1$ vertices $(n\geq3),$ which is formed by connecting a single vertex to all vertices of a cycle of length $n.$ By $T_{n+1},$ we denote the graph with $n+1$ vertices $(n\geq 3  \text{ and  even}),$ which is made by meeting $\frac{n}{2}$ triangles to a single a vertex, (see Figure  \ref{fig:wheel graph}).  By Lemma \ref{lemm: one domand other degree m}, the vector $(\underbrace{1, \cdots, 1}_{n+1 \text{ times}})\in R(A(W_{n+1}))$ and $R(A(T_{n+1})).$
	\end{example}
	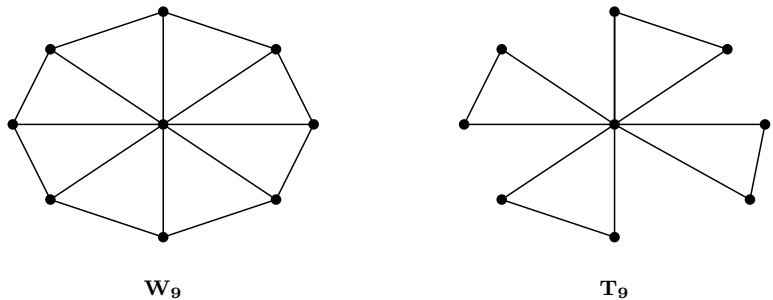
\begin{figure}
		\tiny
		\tikzstyle{ver}=[]
		\tikzstyle{vert}=[circle, draw, fill=black!100, inner sep=0pt, minimum width=4pt]
		\tikzstyle{vertex}=[circle, draw, fill=black!00, inner sep=0pt, minimum width=4pt]
		\tikzstyle{edge} = [draw,thick,-]
		\tikzstyle{node_style} = [circle,draw=blue,fill=blue!20!,font=\sffamily\Large\bfseries]
		\centering
		\begin{tikzpicture}[scale=1]
		\tikzstyle{edge_style} = [draw=black, line width=2mm, ]
		\tikzstyle{node_style} = [draw=blue,fill=blue!00!,font=\sffamily\Large\bfseries]
		\fill[black!100!](-1.5,1) circle (.07);
		\fill[black!100!](1.5,-1) circle (.07);
		\fill[black!100!](-1.5,-1) circle (.07);
		\fill[black!100!](1.5,1) circle (.07);
		\fill[black!100!](0,-1.5) circle (.07);
		\fill[black!100!](0,1.5) circle (.07);
		\fill[black!100!](-2,0) circle (.07);
		\fill[black!100!](2,0) circle (.07);
		\fill[black!100!](0,0) circle (.07);
		\draw[line width=.2 mm] (0,0) -- (1.5,-1);
		\draw[line width=.2 mm] (0,0) -- (-1.5,-1);
		\draw[line width=.2 mm] (0,0) -- (-1.5,1);
		\draw[line width=.2 mm] (0,0) -- (1.5,1);
		\draw[line width=.2 mm] (0,0) -- (0,-1.5);
		\draw[line width=.2 mm] (0,0) -- (0,1.5);
		\draw[line width=.2 mm] (0,0) -- (-2,0);
		\draw[line width=.2 mm] (0,0) -- (2,0);
		\draw[line width=.2 mm] (2,0) -- (1.5,1);
		\draw[line width=.2 mm] (1.5,1) -- (0,1.5);	
		\draw[line width=.2 mm] (-1.5,1) -- (0,1.5);	
		\draw[line width=.2 mm] (-1.5,1) -- (-2,0);	
		\draw[line width=.2 mm] (-2,0) -- (-1.5,-1);	
		\draw[line width=.2 mm] (-1.5,-1) -- (0,-1.5);	
		\draw[line width=.2 mm] (1.5,-1) -- (0,-1.5);
		\draw[line width=.2 mm] (1.5,-1) -- (2,0);
		\node (B1) at (0,-2.2)   {$\bf{W_{9}}$};
		\node (B1) at (6,-2.2)   {$\bf{T_9}$};
		\draw[line width=.2 mm] (6,0) -- (4.5,1);
		\draw[line width=.2 mm] (6,0) -- (7.8,-1);
		\draw[line width=.2 mm] (6,0) -- (6,1.5);
		\draw[line width=.2 mm] (6,0) -- (6,-1.5);
		\draw[line width=.2 mm] (6,0) -- (4.5,-1);
		\draw[line width=.2 mm] (6,0) -- (7.5,1);
		\draw[line width=.2 mm] (6,0) -- (6,1.5);
		\draw[line width=.2 mm] (6,0) -- (4,0);
		\draw[line width=.2 mm] (6,0) -- (8,0);
		\draw[line width=.2 mm] (6,-1.5) -- (4.5,-1);
		\draw[line width=.2 mm] (4,0) -- (4.5,1);
		\draw[line width=.2 mm] (7.5,1) -- (6,1.5);
		\draw[line width=.2 mm] (8,0) -- (7.8,-1);
		\fill[black!100!](6,0) circle (.07);
		\fill[black!100!](8,0) circle (.07);
		\fill[black!100!](4,0) circle (.07);
		\fill[black!100!](6,1.5) circle (.07);
		\fill[black!100!](7.5,1) circle (.07);
		\fill[black!100!](4.5,-1) circle (.07);
		\fill[black!100!](6,-1.5) circle (.07);
		\fill[black!100!](7.8,-1) circle (.07);
		\fill[black!100!](4.5,1) circle (.07);
		\end{tikzpicture}
		\caption{Graphs with a dominating vertex}
		\label{fig:wheel graph}	
	\end{figure}
With the results (Theorem \ref{thm: v edge u; nbd u meet nbd v=null} and Lemma \ref{lemm: one domand other degree m}) proved above we can now prove Theorems \ref{thm: rank 5 graph follows conj}, \ref{thm: dim 2 graph m=2n-5}.
\begin{proof}[Proof of Theorem \ref{thm: rank 5 graph follows conj}]
We divide the proof of this theorem into three cases.
		
Case 1. Let $r(\Gamma)\leq 3.$ Then by Theorem \ref{thm: cha of graphs rank leq 3} and Lemma \ref{lem: diam of rank leq 3 graphs}, $\Gamma\in \mathcal{M}(K_2, K_3)=\mathcal{M}(K_2)\cup \mathcal{M}(K_3)$ and $\text{diam}(\Gamma)\leq 2.$ 
First let $\Gamma\in\mathcal{M}(K_3).$ Then $\Gamma=K_3\bigodot m,$ for some $m=(m_1, m_2, m_3).$ Putting $n=3$ in Equation \eqref{dis: complete graph satis Conj}, we get $\frac{1}{2}\sum\limits_{i=1}^3R_i=(1,1,1)\in R(A(K_3)).$ Thus, by Lemma \ref{lem: v is in row space of graph impl v,some extra is in row space of graph with multiplication}, the vector \[V_1=(\underbrace{1, \cdots, 1}_{t_1 \text{ times} })\in R(A(\Gamma)), \text{ where } t_1=\sum\limits_{i=1}^3m_i.\] If $\Gamma\in \mathcal{M}(K_2),$ similarly, we can show that $A(\Gamma)$ satisfies the conjecture.  
		
Case 2. Let $r(\Gamma)=4.$ Lemma \ref{lem: char of graph of rank 4 with diam 2 and 3} shows that \[\Gamma\in \mathcal{M}(B_1, B_2, B_3, B_4, B_5, P_4, K_4)=\mathcal{M}(B_1, B_2, B_3, B_4, B_5, P_4)\cup\mathcal{M}(K_4).\] Observe that the graphs $B_1, B_2, B_3, B_4, B_5$ and $P_4$ in Figure \ref{fig:rank 4 graphs}. Each of these graphs has at least one pair of vertics (colored by red) $u, v$ (say) such that $u\sim v$ and $\text{nbd}(u)\cap\text{nbd}(v)=\emptyset.$ As a result, $\Gamma$ has two vertices $u', v'$ such that $u'\sim v'$ and $\text{nbd}(u')\cap\text{nbd}(v')=\emptyset$ (by Lemma \ref{lem: nbd umeet nbd v empty after multiplication}). Consequently, Theorem \ref{thm: v edge u; nbd u meet nbd v=null} proves that $A(\Gamma)$ satisfies Conjecture \ref{conj: Cameron on row space}. 
		
Let $\Gamma\in \mathcal{M}(K_4),$ then similarly as the case $\Gamma\in\mathcal{M}(K_3),$ it an be shown that 
\begin{equation*}
V_2=(\underbrace{1, \cdots, 1}_{t_2 \text{ times} })\in R(A(\Gamma)), 
\end{equation*}
$\text{ where } \Gamma=K_4\bigodot m, m=(m_1, \cdots, m_4), \text{ and } t_2=\sum\limits_{i=1}^4m_i.$

Case 3. Let $r(\Gamma)=5.$ Then, by Lemma \ref{lem: ch of graphs of rank five graphs and diam leq 3} $\Gamma\in \mathcal{M}(D_2, \cdots, D_{24})=\mathcal{X}\cup\mathcal{Y},$ where  
\begin{align*}
\mathcal{X}=&\mathcal{M}(D_2, \cdots, D_{5}, D_7, D_8, D_{9}, D_{11}, D_{12}, D_{13}, D_{16}, D_{19}, D_{20}, D_{23}) \text{ and }\\
\mathcal{Y}=&\mathcal{M}(D_6, D_{10}, D_{14}, D_{15}, D_{17}, D_{18}, D_{21}, D_{22}, D_{24}).
\end{align*}
Suppose $\Gamma\in \mathcal{M}(D_2, \cdots, D_{5}, D_7, D_8, D_{9}, D_{11}, D_{12}, D_{13}, D_{16}, D_{19}, D_{20}, D_{23}).$ Look at the graphs $D_2, \cdots, D_{5}, D_7, D_8, D_{9}, D_{11}, D_{12}, D_{13}, D_{16}, D_{19}, D_{20}, D_{23}$ depicted in Figure \ref{fig:rank 5 graphs}. Each of these graphs contains at least one pair of vertices (colored by red) $u, v$ (say) such that $u\sim v$ and $\text{nbd}(u)\cap\text{nbd}(v)=\emptyset.$ So, applying Lemma \ref{lem: nbd umeet nbd v empty after multiplication} and Theorem \ref{thm: v edge u; nbd u meet nbd v=null}, we can  say that $A(\Gamma)$ satisfies the conjecture. 
Now, let
\begin{align*}
\Gamma\in \mathcal{M}(D_6,& D_{10}, D_{14}, D_{15}, D_{17}, D_{18}, D_{21}, D_{22}, D_{24})\\&= \mathcal{M}(D_{10}, D_{21}, D_{22}, D_{24})\cup\mathcal{M}(D_6,  D_{14}, D_{15}, D_{17}, D_{18}).
\end{align*} 
Note that, the adjacency matrices $A(D_{10}), A(D_{21}), A(D_{22}), A(D_{24})$ are non singular (as they achieve the full rank). As a result, the vector  $(1, 1, 1, 1, 1)\in R(A(D_{10})), R(A(D_{21})), R(A(D_{22})),$ and $R(A(D_{24})).$ So, by Lemma \ref{lem: v is in row space of graph impl v,some extra is in row space of graph with multiplication}, the vector \[V_3=(\underbrace{1, \cdots, 1}_{t_3 \text{ times} })\in R(A(\Gamma)),\]  where $\Gamma=D_i\bigodot m,$ for all $i\in\{10, 21, 22, 24\}, m=(m_1, \cdots, m_5)$ and $t_3=\sum\limits_{i=1}^5m_i.$  
That is, $A(\Gamma)$ satisfies the conjecture whenever $\Gamma\in \mathcal{M}(D_{10}, D_{21}, D_{22}, D_{24}).$   
		
Let $\Gamma\in\mathcal{M}(D_6,  D_{14}, D_{15}, D_{17}, D_{18}).$ First, we show that the adjacency matrices \[A(D_6), A(D_{14}), A(D_{15}), A(D_{17}), A(D_{18})\] satisfy the conjecture. Note that, the  graph $D_{18}$ has a unique vertex of degree $6$ (dominating vertex) and the degree of all other vertices are $4.$ So, by Lemma \ref{lemm: one domand other degree m}, the vector $(1, 1, 1, 1, 1, 1, 1)\in R(A(D_{18})).$ Now, look at the remaining adjacency matrices (below).
\begin{equation*}
A(D_6)=\left(
		\begin{array}{ccccccc}
		0 &1 &0&0&0&1&0\\
		1 &0 &1&0&0&1&1\\
		0 &1 &0&1&0&1&1\\
		0 &0 &1&0&1&1&1\\
		0 &0 &0&1&0&0&1\\
		1 &1 &1&1&0&0&0\\
		0 &1 &1&1&1&0&0
		\end{array}
		\right),
		A(D_{14})=\left(
		\begin{array}{cccccc}
		0 &1 &0&0&1&1\\
		1 &0 &1&0&1&0\\
		0 &1 &0&1&1&1\\
		0 &0 &1&0&0&1\\
		1 &1 &1&0&0&1\\
		1 &0 &1&1&1&0\\
		\end{array}
		\right)
		\end{equation*}
		\begin{equation*}
		A(D_{15})=\left(
		\begin{array}{cccccc}
		0 &1 &0&0&1&1\\
		1 &0 &1&0&1&1\\
		0 &1 &0&1&0&1\\
		0 &0 &1&0&1&1\\
		1 &1 &0&1&0&1\\
		1 &1 &1&1&1&0\\
		\end{array}
		\right),
		A(D_{17})=\left(
		\begin{array}{ccccccc}
		0 &1 &0&0&1&1&1\\
		1 &0 &1&0&1&0&1\\
		0 &1 &0&1&1&1&1\\
		0 &0 &1&0&0&1&1\\
		1 &1 &1&0&0&1&0\\
		1 &0 &1&1&1&0&1\\
		1 &1 &1&1&0&1&0
		\end{array}
		\right).
		\end{equation*}
		It is easy to see that
		\begin{align*}\label{eq: D6,D14,D15,D17 satifies conj}
		\frac{1}{2}\left(-R_2+R_3+2R_4+R_6\right)=&(0, 1, 1, 1, 1, 1, 1)\in R(A(D_6))\\
		\frac{1}{2}\left(R_1-R_2+R_5+2R_6\right)=&(1, 1, 1, 1, 1, 1)\in R(A(D_{14}))\\
		\frac{1}{2}\left(R_4+R_5+R_6\right)=&(1, 1, 1, 1, 1, 1) \in R(A(D_{15}))\\
		\frac{1}{2}\left(-R_1+R_2+2R_3+R_5\right)=&(1, 1, 1, 1, 1, 1, 1)\in R(A(D_{17})).
		\end{align*} 
		So, applying Lemma \ref{lem: v is in row space of graph impl v,some extra is in row space of graph with multiplication}, we can say that $\Gamma$ satisfies Conjecture \ref{conj: Cameron on row space}. This completes the proof.
	\end{proof}
	\begin{proof}[Proof of Theorem \ref{thm: dim 2 graph m=2n-5}]
Let $\Gamma$ be a diameter-$2$ graph of order $n$ size $s$ with no dominating vertex. Suppose $s=2n-5.$ Then by Theorem \ref{thm H-M: dim2 m  geq 2n-5}, $\Gamma\in \mathcal{H}.$ Now, each of the graphs in Figure \ref{fig:graph of diam 2;cycle 0f length $5$ and its duplication}, has two red colored vertices say $u$ and $v$ such that $u\sim v$ and $\text{nbd}(u)\cap\text{nbd}(v)=\emptyset.$ Therefore by Lemma \ref{lem: nbd umeet nbd v empty after multiplication} and Theorem \ref{thm: v edge u; nbd u meet nbd v=null}, there exists a non-zero $(0,1)$-vector in the row space of $A(\Gamma)$ (over the real numbers) which does not occur as a row of $A(\Gamma).$ This completes the proof.
\end{proof}	
	
\subsection*{Acknowledgment} I thank Prof. Arvind Ayyer for many helpful discussions.

Data Availability Statement: The availability of data and materials is not applicable.

Conflict of Interest: The authors have not disclosed any competing interests.	
	\bibliographystyle{amsplain}
	\bibliography{gen-inv-lcp.bib}


\end{document}